
\documentclass[a4paper, 11pt]{amsart}  



\usepackage{amssymb,amsmath,amsthm, mathtools, dsfont}
\usepackage{stmaryrd}
\usepackage{enumitem,color}
\usepackage{hyperref}
\hypersetup{hidelinks}
\usepackage[nameinlink]{cleveref}
\usepackage{nicefrac}
\usepackage{microtype}

\Crefname{theorem}{Theorem}{Theorems}
\Crefname{definition}{Definition}{Definitions}
\Crefname{lemma}{Lemma}{Lemma}
\Crefname{remark}{Remark}{Remarks}
\Crefname{conjecture}{Conjecture}{Conjectures}
\Crefname{proposition}{Proposition}{Proposition}
\Crefname{assumption}{Assump\-tion}{Assump\-tions}

\newtheorem{theorem}{Theorem}
\newtheorem{definition}[theorem]{Definition}
\newtheorem{lemma}[theorem]{Lemma}

\newtheorem{remark}[theorem]{Remark}

\newtheorem{assumption}{Assumption}

\newcommand{\e}{\mathrm{e}}
\newcommand{\C}{\mathbb{C}}

\newcommand{\R}{\mathbb{R}}

\newcommand{\iu}{\mathrm{i}}
\renewcommand{\d}{\, \mathrm{d}}
\newcommand{\Lb}{\mathcal{L}}
\renewcommand{\L}{\mathrm{L}}
\renewcommand{\H}{\mathrm{H}}
\newcommand{\W}{\mathrm{W}}

\newcommand{\scpr}[3]{\left \langle #1 , #2 \right \rangle_{#3}}
\newcommand{\dualp}[4]{\left \langle #1 , #2 \right \rangle_{#3,#4}}

\DeclareMathOperator{\real}{\mathrm{Re}}

\DeclareMathOperator{\Div}{\mathrm{div}}

\allowdisplaybreaks


\begin{document}

\title{
Input-to-state stability for bilinear feedback systems
}

\thanks{The first three authors are supported by the German Research Foundation (DFG) via the joint grant JA 735/18-1 / SCHW 2022/2-1.}
\author[R.~Hosfeld]{Ren\'e Hosfeld}
\address[RH]{University of Wuppertal, School of Mathematics and Natural Siences, IMACM, Gau\ss \-str.\ 20, 42119 Wuppertal, Germany and Department of Mathematics, University of Hamburg, Bundesstra\ss e 55, 20146 Hamburg, Germany}
\email{hosfeld@uni-wuppertal.de}
\author[B.~Jacob]{Birgit Jacob}
\address[BJ]{University of Wuppertal, School of Mathematics and Natural Sciences, IMACM, Gau\ss \-str.\ 20, 42119 Wuppertal, Germany}
\email{bjacob@uni-wuppertal.de}
\author[F.~Schwenninger]{Felix Schwenninger}
\address[FS]{Department of Applied Mathematics, University of Twente, P.O. Box 217, 7500 AE Enschede, The Netherlands and Department of Mathematics, University of Hamburg, Bundesstra\ss e 55, 20146 Hamburg, Germany}
\email{f.l.schwenninger@utwente.nl}
\author[Marius Tucsnak]{Marius Tucsnak}
\address[MT]{University of Bordeaux, Institut de Math\'ematiques de Bordeaux, 351 cours de la Lib\'eration, 33405 Talence, France}
\email{Marius.Tucsnak@math.u-bordeaux.fr}

\begin{abstract}
Input-to-state stability estimates with respect to small initial conditions and input functions for infinite-dimensional systems with bilinear feedback are shown.
We apply the obtained results to controlled versions of a viscous Burger equation with Dirichlet boundary conditions, a Schrödinger equation, a Navier--Stokes system and a semilinear wave equation. 
\end{abstract}

\subjclass[2020]{93D20, 93C20, 47J35, 47D06, 35B35.}
%
%

\keywords{Input-to-state stability, bilinear systems, feedback systems, $C_0$-semigroups, admissibility.}


\maketitle
\thispagestyle{plain}
\pagestyle{plain}




\parindent0pt

\section{Introduction}

In this note we study existence of solutions, stability and robustness with respect to external inputs $u$ for abstract infinite dimensional systems of the form
\begin{equation}\label{eq:intro_BFS}
	\left\{
	\begin{aligned}
		\dot{z}(t) &= Az(t) + B_1 u(t) + B_2 N(z(t),y(t)),  \\
		z(0) &= z_0,\\
		y(t) &= Cz(t),
	\end{aligned}
	\right.
\end{equation}
where all appearing operators are possibly unbounded. The precise setting is introduced in \Cref{section:bilinear_feedback_systems}. This abstract class of evolution equations covers numerous partial differential equations such as the Navier--Stokes equation, the Burgers equation, or wave type equations, which will serve as examples for the theory developed in this note. By regarding \eqref{eq:intro_BFS} as a linear system with bilinear  feedback $N(z,y)$, we will prove that for ``small'' initial values $z_0$ and input functions $u:[0,t]\to U$, there exists a unique (mild) solution $z(\cdot)$ of \eqref{eq:intro_BFS}, which satisfies the following stability--robustness estimate
\begin{equation*}
	\lVert z(t) \rVert_X \leq k \lVert z_0 \rVert_X \e^{-\omega t} + k \e^{- \omega t} \lVert \e^{\omega \cdot} u(\cdot) \rVert_{L^2(0,t;U)},
\end{equation*}
for every $t \geq 0$ and some $\omega,k>0$, where $X$ denotes the state space and $U$ the input space. Here, smallness refers to the $X$-norm of $z_{0}$ and a weighted function norm of $u(\cdot)$.

Such an estimate is a special variant of a {\em local input-to-state stability (ISS)} estimate, where ``local'' refers to the smallness of $z_0$ and $u(\cdot)$ in a certain sense. For an overview of recent developments in ISS, the reader is referred to the survey \cite{MiroPrie20}, the book \cite{KaraKrst19book} for infinite-dimensional systems and to \cite{sontagISS} for ODE systems.

In the last decades, ISS has attracted  much attention as it has proven to be a suitable notion to study simultaneously internal stability and robustness with respect to external controls or disturbances of a system. Its various applications to nonlinear systems, such as robust stabilization, observer design and stability analysis of coupled systems and networks, have established ISS within modern control theory. The notion of ISS was originally introduced by Sontag in 1989 in \cite{Sontag89ISS} for nonlinear ODEs and input functions of class $\L^\infty$. In \cite{JNPS16}, ISS of infinite-dimensional linear systems (set $B_2=0$ in \eqref{eq:intro_BFS}) with a possibly unbounded control operator $B_1$ is studied. There, a characterization of ISS with respect to various classes of input functions, including $\L^p$-spaces for $p \in [1,\infty]$, is given in terms of exponential stability of the semigroup generated by $A$ and admissibility of $B_1$ with respect to the considered function space. Note that in infinite dimensions admissibility is a non-trivial property, which is strongly connected to the solution concept of linear systems, see for instance \cite{TucWei09} for $\L^2$-inputs. Thus, the question of ISS for the rather simple class of linear systems becomes non-trivial in infinite dimensions. Further characterizations of ISS for abstract, not necessarily linear, systems are available in terms of ISS-Lyapunov functions, provided that a global solution exists \cite{MiroPrie20}.

For nonlinear infinite dimensional systems (such as \eqref{eq:intro_BFS}), the study of ISS is much more involved and one has to face several problems which naturally come with the nonlinearity. Besides the fact that it is not possible to treat initial values and inputs separately due to the implicit form of the equation, one of the biggest issues is that global solutions do not need to exist, which is essential for ISS. The necessity to study local ISS arises by shrinking the set of initial values and control functions in order to guarantee the existence of global solutions while avoiding too strong assumptions on the nonlinearity. This difficulty, which we treat in \Cref{section:bilinear_feedback_systems} for \eqref{eq:intro_BFS}, is often ignored in the literature by assuming the existence of well behaving solutions, see e.g.~\cite{GuivLogeOpme19,MiroPrie20}.

In contrast to a variety of contributions dealing with (global) ISS (or variations of that notion), such as \cite{DaM13,DaM13b,JNPS16,MiroWirt18a} for infinite dimensional systems, \cite{JacoSchwZwar19,Krstic16MTNS,MaPr11,MiI15b,MiroKaraKrst17,Schw20,ZhZh18}  for parabolic semilinear systems, \cite{Elli09,HoJaSchw22,MiI14b} for bilinear control systems and \cite{GuivLogeOpme19,JaLoRy08} for systems with feedback laws depending only on the output, there is much less literature available on local ISS. One reason for this is that ISS is usually studied under the assumption of global (in time) solutions. In this situation, under suitable assumptions on the right-hand side, the system is ISS, see e.g.~\cite{MiroPrie20,SontagWang96}. However, note that in general \eqref{eq:intro_BFS} does  not possess  global (in time) solutions for all initial values, as can be seen by the choice $X=U_1=U_2=Y=\R$, $A=-I$, $B_1=0$, $B_2=I$, $C=I$ and $N(x,y) = xy$.

In \cite{Mir16} the author gives a characterization of local ISS in terms of so-called local ISS Lyapunov functions for a certain class of semilinear equations (without output feedback) under suitable strong assumptions, which in particular exclude unbounded control operators. A numerical algorithm for computing local ISS Lyapunov functions for a given locally ISS system is presented in \cite{LiBaGrHaWi}. In \cite{DaKaSchm20} the authors prove local ISS of a nonlinear reaction-diffusion equation. The problem of determining the local region of initial conditions and control functions for which a ODE system is local ISS is addressed in \cite{WuMeiZhang}. On the basis of these investigations, they give an algorithm for estimating local ISS properties and apply their results to a power electrics system. A comparable problem for an infinite-dimensional system appears in \cite{MirPrieWirth}, where the authors study the stabilizability of a linear unstable reaction-diffusion equation via saturated control. In this context, they give estimations for the region of attraction. In \cite{ZhengZhu_Burgerseq}, the authors proved local ISS for a Burgers' equation with in-domain and boundary disturbances under strong regularity assumptions on them.

We proceed  as follows. \Cref{section:bilinear_feedback_systems} is devoted to the main result of this paper. We give an explicit description of the bilinear feedback system \eqref{eq:intro_BFS} and we prove the existence of global solutions of \eqref{eq:intro_BFS} for small initial conditions and small input functions, as well as a local ISS result with respect to a weighted $\L^2$-norm. In \Cref{section:ISS_bilin_feedback} we present sufficient conditions for global ISS. More precisely, if either $A$ is a self-adjoint, strictly negative operator or $A=A_0+L$ with $A_0$ skew-adjoint and $L$ bounded and strictly dissipative, then global ISS estimates are shown under an additional dissipation property of the nonlinearity.  In Section \ref{section:examples} we apply the results of \Cref{section:bilinear_feedback_systems} and \Cref{section:ISS_bilin_feedback} to a controlled viscous Burgers equation with Dirichlet boundary conditions, a controlled Schrödinger equation, a controlled Navier--Stokes equation and a controlled semilinear wave equation.
\paragraph{Notation}
Let $X$ be a Banach space. We write $\lVert \cdot \rVert_X$ for the norm on $X$ and $\scpr{\cdot}{\cdot}{X}$ for the inner product, if $X$ is a Hilbert space. Let $U$ be another Banach space. If there exists a dual pairing between $X$ and $U$, we denote it by $\dualp{\cdot}{\cdot}{X}{U}$. The space of all bounded operators from $U$ to $X$ is denoted by $\Lb(U,X)$ and for $B\in \Lb(U,X)$, we write $B^*$ for both, the adjoint operator between Hilbert spaces and the dual operator between Banach spaces (with respect to the dual pairing). By $\L^p$, $\H^m$ and $\H_0^1$, we denote the standard Lebesgue and Sobolev spaces. For a function $f$ on $\R_{\geq 0} \coloneqq [0,\infty)$ we denote its restriction to $[0,t]$ for $t>0$ by $f|_{[0,t]}$.

\section{Local input-to-state stability for bilinear feedback systems}\label{section:bilinear_feedback_systems}

Let $X,U_1,U_2,Y$ be Banach spaces.
We consider the bilinear feedback system of the form
\begin{equation}\label{eq:BFS}\tag{\mbox{$\Sigma$}}
	\left\{
	\begin{aligned}
		\dot{z}(t) &= Az(t) + B_1 u_1(t) + B_2 u_2(t),  \\
		z(0) &= z_0,\\
		y(t) &= Cz(t), \\
		u_2(t)&=N(z(t),y(t)),\\
	\end{aligned}\right.
\end{equation}
where the spaces and operators satisfy the following standing assumptions
\begin{enumerate}[label=$\bullet$] 
	\item $A$ generates a $C_0$-semigroup $(T(t))_{t \geq 0}$ on $X$,
	\item $B_1 \in \Lb(U_1,X_{-1})$, $B_2 \in \Lb(U_2,X_{-1})$ are the {\em control operators},
	\item $C \in \Lb(X_1,Y)$ is the  {\em observation operator},
	\item $Y \subseteq X$ with continuous embedding and $C$ has an extension $C\in \Lb(X)$,
	\item $N:X \times Y \rightarrow U_2$ is a continuous bilinear mapping which satisfies for some $K>0$ and $p \in (0,1)$ 
	\begin{equation}\label{eq:assumptions_on_N}
		\lVert N(z,y) \rVert_{U_2} \leq K \lVert z \rVert_X \lVert y \rVert_{X}^{1-p} \lVert y \rVert_Y^p.
	\end{equation}
\end{enumerate}

The spaces $X, U_1,U_2$ and $Y$ are referred to as the {\em state space} ($X$), the {\em input spaces} ($U_1,U_2$) and the {\em output space} ($Y$).
By $X_1$ and $X_{-1}$ we denote the standard inter- and extrapolation spaces associated to the generator $A$. That is, $X_1$ is $D(A)$, the domain of $A$, equipped with the norm $\lVert x \rVert_{X_1}=\lVert (\beta -A)x \rVert_X$ and $X_{-1}$ is the completion of $X$ with respect to the norm $\lVert x \rVert_{X_{-1}}=\lVert (\beta -A)^{-1}x\rVert_X$ for some $\beta \in \rho(A)$, the resolvent set of $A$. It is well-known that these definitions are independent of the choice of $\beta \in \rho(A)$ and we have the continuous embeddings $X_1 \hookrightarrow X \hookrightarrow X_{-1}$. Furthermore, $(T(t))_{t \geq 0}$ extends uniquely to a $C_0$-semigroup $(T_{-1}(t))_{t \geq 0}$ on $X_{-1}$ whose generator, denoted by $A_{-1}$, is an extension of $A$ with domain $D(A_{-1})=X$. With the above conventions, the control operators and observation operators need not be bounded with respect to the norm of $X$, which is what is meant when we say that $B_{1}, B_{2}$ and $C$ are {\em unbounded}.
By $\omega_0((T(t))_{t \geq 0})$ we denote the growth bound of the semigroup $(T(t))_{t \geq 0}$.
The semigroup $(T(t))_{t \geq 0}$ is exponentially stable if $\omega_0((T(t))_{t \geq 0}) < 0$, i.e. there exist $M,\lambda>0$ such that
\begin{equation}\label{eq:exp_stable}
	\lVert T(t) \rVert_X \leq M \e^{- \lambda t} \quad \text{ for all } t \geq 0.
\end{equation}
Before we formalize the concept of a solution of \ref{eq:BFS} we recall some definitions and standard facts on well-posedness (in the $\L^2$-sense, see e.g. \cite{CurtainWeiss89,TucWei14,Weiss89ii,Weiss89i}) of the, to \ref{eq:BFS} associated, linear system, 
\begin{equation}\label{eq:LS}\tag{\mbox{$\Sigma_{\rm lin}$}}
	\left\{
	\begin{aligned}
		\dot{x}(t) &= Ax(t) + B_1 u_1(t) + B_2u_2(t), &&\\
		x(0) &= x_0,\\
		y(t) &= Cx(t), && \\
	\end{aligned}\right.
\end{equation}
with the same assumptions on the operators as before. The linear system \ref{eq:LS} is called well-posed, if
\begin{enumerate}[label=(\roman*)]
	\item $A$ generates a $C_0$-semigroup $(T(t))_{t\geq0}$ on $X$,
	\item for $i=1,2$, $B_i$ is an {\em $\L^2$-admissible control operator}, i.e.\ for some (and hence for all) $t>0$, it holds that the {\em input map} $\Phi_t^i$ is an operator in $\Lb(\L^2(0,\infty;U_i),X)$, where
	\begin{equation*}
		\Phi_t^i u_i \coloneqq \int_0^t T_{-1}(t-s) B_i u_i(s) \d s, \quad u_i \in \L^2(0,\infty;U_i).
	\end{equation*}
	\item $C$ is an {\em $\L^2$-admissible observation operator}, i.e.\ for some (and hence for all) $t>0$, it holds that the {\em output map} $\Psi_t$ extends to an operator in $\Lb(X,\L^2(0,\infty,Y))$, where
	\begin{equation*}
		\Psi_t x = (CT(\cdot)x)|_{[0,t)} \quad x \in D(A).
	\end{equation*}
	\item for $i=1,2$, there exists a family $(\mathbb{F}_t^i)_{t \geq 0}$ of bounded linear operators $\mathbb{F}_t^i$ from $\L^2(0,\infty;U_i)$ to $\L^2(0,\infty;Y))$, called the {\em input-output maps}, such that $\mathbb{F}_0^i=0$ and
	\begin{align*}
		\mathbb{F}_{\tau+t}^i (u \underset{\tau}{\diamond} v)= \mathbb{F}_\tau^i u \underset{\tau}{\diamond} (\Psi_t \Phi_\tau^i u + \mathbb{F}_t^i v),
	\end{align*}
	for all $\tau,t>0$ and $u,v \in \L^2(0,\infty;U_i)$, where
	\begin{equation*}
		(u \underset{\tau}{\diamond} v)(t) \coloneqq \left\{
		\begin{aligned}
			& u (t), && \text{if } t \in [0,\tau),\\
			& v(t- \tau), && t \in [\tau,\infty).
		\end{aligned}
		\right.
	\end{equation*}
\end{enumerate}

If $U_i$, $X$ and $Y$ are Hilbert spaces, \cite[Theorem~5.1]{CurtainWeiss89} states that (iv) can be equivalently replaced by
\begin{enumerate}
	\item[(iv')] Some (and hence any) function $G_i:\C_{\omega_0( (T(t))_{t \geq 0})} \rightarrow \Lb(U_i,Y)$, $i=1,2$, satisfying
	\begin{equation*}
		G_i(\alpha) - G_i(\beta) = C[(\alpha-A_{-1})^{-1} - (\beta - A_{-1})^{-1}]B_i
	\end{equation*}
	is bounded on some right half-plane $\C_\gamma = \{ \lambda \in \C \mid \real \lambda > \gamma\}$.
\end{enumerate}
Note that the right-hand side of the expression above is well-defined by the resolvent identity.
The functions $G_i$, $i=1,2$, are called the {\em transfer functions} associated to the triples $(A,B_i,C)$, $i=1,2$, and they are unique up to a constant operator. Given such a transfer function, the operators $\mathbb{F}_t^i:\L^2(0,t;U) \rightarrow \L^2(0,t;Y)$ given by
\begin{equation*}
	(\mathbb{F}_t^i u_i)(\cdot) =  C\left[ \int_0^\cdot T_{-1}(\cdot -s) B_iu_i(s) \d s - (\alpha -A_{-1})^{-1}B_iu_i(\cdot) \right]+ G_i(\alpha) u_i(\cdot), 
\end{equation*}
for $u_i \in W_{\rm loc}^{1,2}(0,\infty;U_i)$ with $u_i(0)=0$ and $\real \alpha$ large enough extend to a family $(\mathbb{F}_t^i)_{t \geq 0}$ satisfying (iv), see~\cite{Weiss88,CurtainWeiss89}. Here, $\W_{\rm loc}^{1,2}$ denotes the standard Sobolev space consisting of $\L^2_{\rm loc}$-functions whose weak derivatives exist in a distributional sense and are again $\L^2_{\rm loc}$-functions. Since $\W_{\rm loc}^{1,2}$ is dense in $\L^2$ on bounded intervals, the operators $\mathbb{F}_t^i$ are uniquely determined by that formula. Moreover, the family $(\mathbb{F}_t^i)_{t \geq 0}$ is uniquely determined by $A$, $B$ and $C$ up to an additive constant operator related to the transfer function.

Functions $x \in C([0,\infty);X)$ and $y \in \L^2_{\rm loc}(0,\infty;Y)$ are called a {\em solution (or state trajectory)} and the {\em output function (or output)} of \ref{eq:LS}, respectively, with input data $x_0 \in X$ and $u_i \in \L^2(0,\infty;U_i)$, $i=1,2$, if they satisfy the equations
\begin{align*}
	x(t) &= T(t)x_0 + \Phi_t^1u_1 + \Phi_t^2u_2,\\
	y|_{[0,t]}&= \Psi_t x_0+ \mathbb{F}_t^1 u_1 + \mathbb{F}_t^2u_2
\end{align*}
for all $t\ge0$.
From this formula it follows immediately that a well-posed linear system admits a unique solution $x$ and output $y$ for all $x_0 \in X$ and $u_i \in \L^2(0,\infty;U_i)$ and for $t\geq 0$ there exist positive constants $k_{1,t}$ and $k_{2,t}$ such that for all $x_0 \in X$ and $u_i \in \L^2(0,\infty;U_i)$ it yields
\begin{align}\label{eq:LS_well_posed}
	\begin{split}
		\lVert x(t) \rVert_X &\leq k_{1,t} (\lVert x_0 \rVert_X + \lVert u_1 \rVert_{\L^2(0,t;U_1)} + \lVert u_2 \rVert_{\L^2(0,t;U_2)} ),\\
		\lVert y \lVert_{\L^2(0,t;Y)} &\leq k_{2,t} (\lVert x_0 \rVert_X + \lVert u_1 \rVert_{\L^2(0,t;U_1)} + \lVert u_2 \rVert_{\L^2(0,t;U_2)} ).
	\end{split}
\end{align}
If $A$ generates an exponentially stable semigroup, then $k_{1,t}$ and $k_{2,t}$ can be chosen independent of $t$, see e.g.~\cite{Weiss89i, Weiss89ii, CurtainWeiss89}.

\begin{definition}
	Let \ref{eq:LS} be well-posed. 
	The functions $z \in C([0,\infty);X)$ and $y \in \L^2_{\rm loc}(0,\infty;Y)$ are the {\em solution} and {\em output} of \ref{eq:BFS} with input data $z_0 \in X$ and $u_1 \in \L^2(0,\infty;U_1)$, if they satisfy
	\begin{align}\label{eq:bilin_feedback_solution_and_output}
		\begin{split}
			z(t) &= T(t)z_0 + \Phi_t^1u_1 + \Phi_t^2N(z,y),\\
			y|_{[0,t]}&= \Psi_t z_0+ \mathbb{F}_t^1 u_1 + \mathbb{F}_t^2 N(z,y).
		\end{split}
	\end{align}
\end{definition}

For $t_0 \in (0,\infty]$ and $\omega>0$, let  $\L^2_\omega(0,t_0;U)$ be the weighted $\L^2$-space of $U$-valued functions given by
\begin{equation*}
	\L^2_\omega(0,t_0;U) \coloneqq \{ f \in \L^2(0,t_0;U) \mid \e^{\omega \cdot} f \in \L^2(0,t_0;U) \}
\end{equation*}
with norm 
\begin{equation*}
	\lVert f \rVert_{\L^2_\omega(0,t_0;U)} = \lVert \e^{\omega \cdot} f \rVert_{\L^2(0,t_0;U)}.
\end{equation*}

We introduce the notion of (local) input-to-state stability for \ref{eq:BFS} with respect to $\mathcal{U}=\L^p$ for $1 \leq p \leq \infty$ and $\mathcal{U}=\L^2_\omega$ for $\omega >0$. It is obvious that the definition can be generalized to spaces of input functions $\mathcal{U}$ different from $\L^2$ and $\L^2_{\omega}$ and to other (abstract) systems than \ref{eq:BFS} without any problems, see e.g.~\cite{JNPS16,MiroPrie20}. By $\mathcal{KL}$ and $\mathcal{K}_\infty$ we denote the classes of comparison functions defined by
\begin{align*}
	\mathcal{KL} &\coloneqq  \left\{ \beta \in C(\R_{\geq 0}^2;\R_{\geq 0}) \middle \vert \,
	\begin{aligned}
		&\forall t \geq 0: \beta(\cdot,t) \text{ is  strictly increasing} \\
		&\text{with } \beta(0,t)=0 \text{ and }\\
		&\forall s>0: \beta(s,\cdot) \text{ is strictly decreasing to } 0
	\end{aligned}
	\right\},\\
	\mathcal{K}_\infty & \coloneqq \left\{ \gamma \in C(\R_{\geq 0};\R_{\geq 0}) \middle \vert \, \gamma \text{ is strictly increasing to } \infty \text{ with } \gamma(0)=0 \right\}.
\end{align*}

\begin{definition}
	Let $\mathcal{U}$ be either $\L^p(0,\infty;U)$ for $1 \leq p \leq \infty$ or $\L^2_\omega(0,\infty;U)$ for some $\omega >0$.
	The bilinear feedback system \ref{eq:BFS} is called {\em locally input-to-state stable with respect to $\mathcal{U}$ ($\mathcal{U}$-ISS)} if there exist $\beta \in \mathcal{KL}$, $\gamma \in \mathcal{K}_\infty$ and $\varepsilon>0$ such that \ref{eq:BFS} admits for all $z_0 \in X$ and $u_1 \in \mathcal{U}$ with $\lVert z_0 \rVert_X + \lVert u \rVert_{\mathcal{U}} \leq \varepsilon$ a unique solution $z \in C([0,\infty);X)$ which satisfies for every $t \geq 0$
	\begin{equation}\label{eq:mathcalU_ISS_definition}
		\lVert z(t) \rVert_X \leq \beta(\lVert z_0 \rVert_X, t) + \gamma(\lVert u_1|_{[0,t]} \rVert_{\mathcal{U}}).
	\end{equation}
	System \ref{eq:BFS} is called {\em $\mathcal{U}$-ISS} if the above holds with $\varepsilon= \infty$.
\end{definition}

In \cite[Section~7]{TucWei14} the authors proved existence of local in time solutions under our general assumptions on the bilinear feedback system. We modified their techniques to obtain global in time solutions as well as an $\L^2_\omega$-ISS estimate.

\begin{theorem}\label{thm:global_existence_for_small_data} 
	Let $A$ be the generator of an exponentially stable semigroup $(T(t))_{t \geq 0}$ with $M,\lambda>0$ such that \eqref{eq:exp_stable} holds, and let \ref{eq:LS} be well-posed. Then for every $\omega \in (0, \lambda)$ there exists a constant $\varepsilon>0$ such that for all input data $z_0 \in X$ and $u_1 \in \L^2_\omega(0,\infty;U_1)$ with
	\begin{equation}\label{eq:small_initial_data}
		\lVert z_0 \rVert_X + \lVert u_1 \rVert_{\L^2_\omega(0,\infty;U_1)} \leq \varepsilon,
	\end{equation}
	it holds that \ref{eq:BFS} admits a unique solution $z \in C([0,\infty);X)$ and an output $y \in \L^2(0,\infty;Y)$. Furthermore, there exists a constant $k>0$ such that for every $t \geq 0$
	\begin{equation}\label{eq:z_exp_bounded}
		\lVert z(t) \rVert_X \leq  k \e^{-\omega t} (\lVert z_0 \rVert_X + \lVert u_1 \rVert_{\L^2_\omega(0,t;U_1)}).
	\end{equation}
	In particular, \ref{eq:BFS} is locally $\L^2_\omega$-ISS.
\end{theorem}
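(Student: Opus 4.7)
The approach is a Banach fixed-point argument in a weighted space of trajectories. Let $E$ denote the Banach space of pairs $(z,y)$ with $z \in C([0,\infty);X)$ and $y \in \L^2_{\rm loc}(0,\infty;Y)$, endowed with
\begin{equation*}
\|(z,y)\|_E := \sup_{t \geq 0} \e^{\omega t}\|z(t)\|_X + \|\e^{\omega \cdot} y\|_{\L^2(0,\infty;Y)},
\end{equation*}
and, for fixed data $(z_0,u_1)$ satisfying \eqref{eq:small_initial_data}, introduce $\Gamma \colon E \to E$ by
\begin{equation*}
\Gamma(z,y)(t) = \bigl( T(t) z_0 + \Phi_t^1 u_1 + \Phi_t^2 N(z,y),\ \Psi_t z_0 + \mathbb{F}_t^1 u_1 + \mathbb{F}_t^2 N(z,y) \bigr).
\end{equation*}
Fixed points of $\Gamma$ coincide with solutions of \ref{eq:BFS}, so the plan is to apply the Banach fixed-point theorem on a closed ball $B_R \subset E$ with $R$ proportional to $\varepsilon$.

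The central linear estimate leverages the rescaled semigroup $S(t) := \e^{\omega t} T(t)$, which is exponentially stable with rate $\lambda - \omega > 0$. Admissibility of $B_1, B_2, C$ and well-posedness of \ref{eq:LS} are invariant under such exponential rescaling (transfer functions merely shift in the frequency variable), and, combined with the uniform-in-$t$ version of \eqref{eq:LS_well_posed} available under exponential stability, they yield a constant $k_0 > 0$ independent of $t$ such that
\begin{equation*}
\bigl\|\bigl(T(\cdot) z_0 + \Phi^1_\cdot v_1 + \Phi^2_\cdot v_2,\ \Psi_\infty z_0 + \mathbb{F}^1 v_1 + \mathbb{F}^2 v_2\bigr)\bigr\|_E \leq k_0 \bigl(\|z_0\|_X + \|v_1\|_{\L^2_\omega(0,\infty;U_1)} + \|v_2\|_{\L^2_\omega(0,\infty;U_2)}\bigr).
\end{equation*}
For the nonlinearity, \eqref{eq:assumptions_on_N}, the pointwise bound $\e^{\omega s}\|z(s)\|_X \leq R$ on $B_R$, the continuous embedding $Y \hookrightarrow X$, and $\|y\|_{\L^2(0,\infty;Y)} \leq \|\e^{\omega \cdot} y\|_{\L^2(0,\infty;Y)} \leq R$ combine to give
\begin{equation*}
\|N(z,y)\|_{\L^2_\omega(0,\infty;U_2)}^2 \leq K^2 \int_0^\infty \bigl(\e^{\omega s}\|z(s)\|_X\bigr)^2 \|y(s)\|_X^{2(1-p)} \|y(s)\|_Y^{2p} \d s \leq C\, R^4,
\end{equation*}
so the nonlinear contribution to $\|\Gamma(z,y)\|_E$ is quadratic in $R$.

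Combining the two bounds yields $\|\Gamma(z,y)\|_E \leq k_0 \varepsilon + k_0 \sqrt{C}\, R^2$, so the choice $R := 2 k_0 \varepsilon$ together with $\varepsilon$ small enough secures the self-mapping $\Gamma(B_R) \subset B_R$. For contraction, bilinearity of $N$ gives $N(z,y) - N(z',y') = N(z-z',y) + N(z',y-y')$, and the same estimates produce $\|\Gamma(z,y) - \Gamma(z',y')\|_E \leq C'\, R\, \|(z,y) - (z',y')\|_E$; a further reduction of $\varepsilon$ makes $\Gamma$ a strict contraction. The unique fixed point is the desired solution, and $\sup_{t \geq 0} \e^{\omega t}\|z(t)\|_X \leq R = 2 k_0 \varepsilon$ delivers \eqref{eq:z_exp_bounded} with $k = 2 k_0$. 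The main obstacle is the invariance of admissibility and well-posedness with uniform-in-$t$ constants under exponential rescaling; this is a known but nontrivial property of $\L^2$-well-posed linear systems that must be invoked carefully. The parameter $p \in (0,1)$ in \eqref{eq:assumptions_on_N} plays only a cosmetic role in this existence argument, since $\|y\|_X^{1-p}\|y\|_Y^p$ is controlled by a multiple of $\|y\|_Y$ via the embedding; presumably it is exploited more substantively in the sharper ISS estimates of \Cref{section:ISS_bilin_feedback}.
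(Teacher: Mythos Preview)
Your argument is correct, but it takes a genuinely different route from the paper's. The paper runs the contraction not on the trajectory pair $(z,y)$ but on the feedback input $u_2$: for $u_2$ in the ball $S_\varepsilon \subset \L^2_\omega(0,\infty;U_2)$ one solves the \emph{linear} system \ref{eq:LS} with inputs $u_1,u_2$ to obtain $z,y$, and then sets $\mathcal{G}(u_2) \coloneqq N(z,y)$. This keeps the fixed-point space one-dimensional (just $u_2$) and uses well-posedness of \ref{eq:LS} as a black box to produce $z,y$. A further difference is in how the output enters: the paper applies the well-posedness estimate for the \emph{unshifted} system to control only the unweighted norm $\|y\|_{\L^2(0,\infty;Y)}$, and then genuinely uses the exponent $p<1$ via H\"older's inequality to absorb the missing weight, writing $\e^{\omega t}\|N\| \lesssim (\e^{\omega t}\|z\|_X)^{2-p}\,\e^{(p-1)\omega t}\|y\|_Y^{p}$ with the factor $\e^{(p-1)\omega t}$ supplying integrability. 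Your approach instead puts the weight on $y$ from the start and bounds $\|y\|_X^{1-p}\|y\|_Y^{p}\le c\|y\|_Y$ through the embedding, which is why $p$ looks cosmetic to you; the price is that you need the \emph{shifted} system to control $\|\e^{\omega\cdot}y\|_{\L^2}$, not just $\|y\|_{\L^2}$. Both are legitimate, and your acknowledgement that uniform-in-$t$ well-posedness survives the exponential rescaling is exactly the point that makes your version go through.
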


\begin{proof}
	Let $\omega \in (0,\lambda)$, $z_0 \in X$ and $u_i \in \L^2_\omega(0,\infty;U_i)$ for $i=1,2$. By $z$ and $y$ we denote the state trajectory and the output of \ref{eq:LS}. Since $\e^{\omega \cdot} u_i \in \L^2(0,\infty;U_i)$ for $i=1,2$, the functions $x=\e^{\omega \cdot} z$ and $\e^{\omega \cdot}y$ are the state trajectory and the output of the shifted well-posed linear system
	\begin{equation}\label{eq:shiftedLS}\tag{\mbox{$\widetilde{\Sigma}_{\rm lin}$}}
		\left\{
		\begin{aligned}
			\dot{x}(t) &= (A+\omega I)x(t) + B_1 \e^{\omega t}u_1(t) + B_2 \e^{\omega t} u_2(t), && t \geq 0, \\
			x(0) &= z_0,\\
			\e^{\omega t} y
			(t) &= C x(t), && t \geq 0.\\
		\end{aligned}\right.
	\end{equation}
	By our choice of $\omega$, $A+ \omega I$ generates an exponentially stable semigroup. Thus, by \eqref{eq:LS_well_posed}, once applied to the shifted linear system (for the state) and once to the original linear system (for the output) there exist constants $k_1,k_2>0$ such that
	\begin{align}\label{eq:z_exp_bounded_proof}
		\begin{split}
			\lVert \e^{\omega t}z(t) \rVert_X &\leq  k_1  (\lVert z_0 \rVert_X + \lVert \e^{\omega \cdot} u_1 \rVert_{\L^2(0,\infty;U_1)} + \lVert \e^{\omega \cdot} u_2 \lVert_{\L^2(0,\infty;U_2)} ),\\
			\lVert y \rVert_{\L^2(0,t;Y)} & \leq k_2  (\lVert z_0 \rVert_X + \lVert u_1 \rVert_{\L^2(0,\infty;U_1)} + \lVert u_2 \lVert_{\L^2(0,\infty;U_2)} )
		\end{split}
	\end{align}
	holds.
	
	Let $K \geq 0$ and $p \in (0,1)$ such that \eqref{eq:assumptions_on_N} is satisfied and choose
	$\varepsilon>0$ such that
	\begin{align}\label{eq:choice_varepsilon}
		\begin{split}
			\varepsilon &\leq  \frac{(2 \omega)^{\nicefrac{(1-p)}{2}}}{4 K \lVert C \rVert_{\Lb(X)}^{1-p} k_1^{2-p} k_2^p}, \\
			\varepsilon &<\frac{(2 \omega)^{\nicefrac{(1-p)}{2}}}{2 K \lVert C \rVert_{\Lb(X)}^{1-p} k_1^{1-p} k_2^p}.
		\end{split}
	\end{align}
	
	Now, take $z_0 \in X$ and $u_1 \in \L^2_\omega(0,\infty;U_1)$ such that \eqref{eq:small_initial_data} holds with $\varepsilon$ as above. For any $u_2$ in the set
	\begin{equation*}
		S_\varepsilon \coloneqq \{ u \in \L^2_\omega(0,\infty;U_2) \mid \lVert u \rVert_{\L^2_\omega(0,\infty;U_2)} \leq \varepsilon \}
	\end{equation*}
	we denote by $z$ and $y$ the state trajectory and the output of the linear system \ref{eq:LS} with input data $z_0$ and $u_i$, $i=1,2$. We will prove that 
	\begin{equation*}
		\mathcal{G}: S_\varepsilon \rightarrow S_\varepsilon, \quad \mathcal{G}(u_2) \coloneqq N(z,y)
	\end{equation*}
	is a contraction.
	This then implies that $\mathcal{G}$ admits a unique fixed point in $S_\varepsilon$ and thus, existence of a solution follows. Uniqueness of the solution follows from \cite[Theorem~7.6]{TucWei14}.\\
	In order to verify our claim on $\mathcal{G}$, we first check that $\mathcal{G}$ is well-defined. From our assumptions on $N$ and the boundedness of $C$, we deduce for almost every $t>0$ that
	\begin{align*}
		\lVert \e^{\omega t} N(z(t),y(t)) \rVert_{U_2}
		&\leq K \e^{\omega t} \lVert z (t) \rVert_X \lVert y(t) \rVert_X^{1-p} \lVert y(t) \rVert_{Y}^p\\
		&\leq K\|C\|_{\mathcal{L}(X)}^{1-p}   \lVert \e^{\omega t}z(t) \rVert_X^{2-p}  \e^{(p-1)\omega t}\lVert y(t) \rVert_{Y}^p\\
		&\leq  K\|C\|_{\mathcal{L}(X)}^{1-p}  (2k_{1}\varepsilon)^{2-p} \e^{(p-1) \omega t} \lVert y(t) \rVert_{Y}^p,
	\end{align*}
	where the last inequality holds by the first inequality in \eqref{eq:z_exp_bounded_proof}, \eqref{eq:small_initial_data} and since $u_2 \in S_\varepsilon$. 
	We infer by H\"older's inequality, \eqref{eq:z_exp_bounded_proof}, \eqref{eq:small_initial_data}, $u_2 \in S_\varepsilon$ and \eqref{eq:choice_varepsilon} that
	\begin{align*}
		\lVert \e^{\omega \cdot} N(z,y) \rVert_{\L^2(0,\infty;U_2)}
		&\leq K \lVert C \rVert_{\Lb(X)}^{1-p} (2k_1 \varepsilon)^{2-p} \left(\frac{1}{2 \omega}\right)^{\nicefrac{(1-p)}{2}} \lVert y \rVert_{\L^2(0,\infty;Y)}^p \\
		&\leq K\|C\|_{\Lb(X)}^{1-p} k_{1}^{2-p} k_2^p \left(\frac{1}{2 \omega}\right)^{\nicefrac{(1-p)}{2}} (2\varepsilon)^2 \leq \varepsilon
	\end{align*} 
	and thus, $\mathcal{G}(u_{2})\in S_{\varepsilon}$.\\
	We conclude contractivity using similar methods. Let $v_i\in S_\varepsilon$, $i=1,2$, be arbitrary. By $z_i$ and $y_i$, $i=1,2$, we denote the state trajectory and the output of \ref{eq:LS} with input data $z_0$ and $u_1$ satisfying \eqref{eq:small_initial_data} and $u_2=v_i$, $i=1,2$. Since $N$ is bilinear, it holds that
	\begin{align*}
		\mathcal{G}(v_1) - \mathcal{G}(v_2)
		&= N(z_1,y_1) - N(z_2,y_2)\\
		&= N(z_1-z_2,y_1) + N(z_2,y_1-y_2).
	\end{align*}
	We estimate each term separately. Note that $\e^{\omega \cdot}(z_1-z_2)$ is the state trajectory of the shifted linear system \ref{eq:shiftedLS} with $z_0=0$, $u_1=0$ and $u_2= v_1-v_2$. From \eqref{eq:z_exp_bounded_proof}, and as before, the boundedness of $C$, \eqref{eq:small_initial_data} and the fact that $v_1 \in S_\varepsilon$, we deduce that
	\begin{align*}
		\MoveEqLeft \lVert \e^{\omega t} N(z_1(t)-z_2(t),y_1(t)) \rVert_{U_2}\\
		&\leq K \lVert \e^{\omega t} (z_1(t) - z_2(t)) \rVert_X \lVert \e^{\omega t}y_1(t) \rVert_X^{1-p} \e^{(p-1)\omega t}\lVert y_1(t) \rVert_Y^p\\
		& \leq K \lVert C \rVert_{\Lb(X)}^{1-p} k_1^{2-p} (2 \varepsilon)^{1-p} \e^{(p-1)\omega t} \lVert y (t) \rVert_Y^p \lVert \e^{\omega \cdot} (v_1 -v_2) \rVert_{\L^2(0,\infty;U_2)}.
	\end{align*}
	Applying H\"older's inequality and \eqref{eq:z_exp_bounded_proof}, as before, yields
	\begin{align*}
		\MoveEqLeft \lVert \e^{\omega \cdot} N(z_1-z_2,y_1) \rVert_{\L^2(0,\infty;U_2)}\\
		&\leq 2 K \lVert C \rVert_{\Lb(X)}^{1-p} k_1^{2-p} k_2^p \varepsilon \lVert\e^{\omega \cdot} (v_1-v_2) \rVert_{\L^2(0,\infty;U_2)}.
	\end{align*}
	For the second term we obtain similarly
	\begin{align*}
		\MoveEqLeft \lVert \e^{\omega t} N(z_2(t),y_1(t)-y_2(t)) \rVert_{U_2}\\
		&\leq 2K \lVert C \rVert_{\Lb(X)}^{1-p} k_1^{1-p}\varepsilon \lVert \e^{\omega \cdot}(v_1 -v_2) \rVert_{\L^2(0,\infty;U_2)}^{1-p} \e^{(p-1) \omega t} \lVert y_1(t)- y_2(t) \rVert_{Y}^p.
	\end{align*}
	Again, H\"older's inequality and \eqref{eq:z_exp_bounded_proof} yield that
	\begin{align*}
		\MoveEqLeft \lVert \e^{\omega \cdot} N(z_2,y_1-y_2) \rVert_{\L^2(0,t;U_2)}\\
		&\leq K \lVert C \rVert_{\Lb(X)}^{1-p} k_1^{2-p} k_2^p \left( \frac{1}{2 \omega} \right)^{\nicefrac{(1-p)}{2}}  \varepsilon \lVert \e^{\omega \cdot} (v_1-v_2)\rVert_{\L^2(0,\infty;U_2)}.
	\end{align*}
	Hence, it follows that
	\begin{align*}
		\MoveEqLeft \lVert \e^{\omega \cdot} (\mathcal{G}(v_1) - \mathcal{G}(v_2)) \rVert_{\L^2(0,\infty;U_2)} \\
		&\leq \lVert \e^{\omega \cdot} N(z_1 - z_2 , y_1) \rVert_{\L^2(0,\infty;U_2)} + \lVert \e^{\omega \cdot} N(z_2, y_1-y_2) \rVert_{\L^2(0,\infty;U_2)} \\
		& \leq 4 K \lVert C \rVert^{1-p} k_1^{2-p} k_2^p \left(\frac{1}{2 \omega} \right)^{\nicefrac{1-p}{2}} \varepsilon \lVert \e^{\omega \cdot} (v_1 - v_2) \rVert_{\L^2(0,\infty;U_2)}.
	\end{align*}
	By \eqref{eq:choice_varepsilon}, $\mathcal{G}$ is a contraction on $S_\varepsilon$ and therefore, there exists a unique $u_2 \in S_\varepsilon$ such that $u_2=N(z,y)$, where $z$ and $y$ are the solution and the output of \ref{eq:LS} with input data $x_0$ and $u_1$ satisfying \eqref{eq:small_initial_data}. Hence, $z$ and $y$ are the solution and the output of \ref{eq:BFS} and from \eqref{eq:z_exp_bounded_proof} we deduce that 
	\begin{equation}\label{eq:proof_hidden_ISS_estimate}
		\lVert z(t) \rVert_X \leq 2 k_1 \varepsilon \e^{-\omega t}.
	\end{equation}
	To prove the ISS estimate, let $\varepsilon$ be given as above and let $z_0 \in X$ and $u_1 \in \L^2_\omega(0,\infty;U_2)$ such that \eqref{eq:small_initial_data} holds and denote the corresponding solution and output of \ref{eq:BFS} by $z$ and $y$, respectively. Further, let $t>0$ be arbitrary and define
	\begin{equation}
		\tilde{\varepsilon} \coloneqq \lVert z_0 \rVert_X + \lVert u_1 \rVert_{\L^2_\omega(0,t;U_1)} \leq \varepsilon.
	\end{equation}
	It is clear that $\tilde{\varepsilon}$ satisfies \eqref{eq:choice_varepsilon} and that \ref{eq:BFS} admits for $z_0$ and $\tilde{u}_1=\mathds{1}|_{[0,t]} u_1$ a unique solution $\tilde{z}$ satisfying \eqref{eq:proof_hidden_ISS_estimate} with $\tilde{\varepsilon}$, i.e. by the definition of $\tilde{\varepsilon}$
	\begin{equation*}
		\lVert \tilde{z}(t) \rVert_X \leq 2k_1 \e^{-\omega t} (\lVert z_0 \rVert_X + \lVert u_1 \rVert_{\L^2_\omega(0,t;U_1)}).
	\end{equation*}
	
	As a consequence of the causality of the linear system \ref{eq:LS}, we obtain that
	\begin{equation*}
		z|_{[0,t]} = \tilde{z}|_{[0,t]},
	\end{equation*}
	which completes the proof.
\end{proof}

\begin{remark}    \Cref{thm:global_existence_for_small_data} also holds for $\L^q$-well-posed linear systems with $1 \leq q <\infty$ \ref{eq:LS} (see~\cite[Definition~2.2.1]{Staffans} for a definition) with exponentially stable $C_0$-semigroup. In this case, we consider input functions $u_1 \in \L^q_\omega$, the weighted $\L^q$-space, defined analogously to $\L^2_\omega$, and obtain an $\L^q_\omega$-ISS estimate under analog smallness condition as before. The proof stays the same up to adaption of the used H\"older inequalities and the resulting constants.
\end{remark}

\begin{remark}\label{rem:standard_norms_but_not_standrd_ISS}
	In the situation of \Cref{thm:global_existence_for_small_data} we obtain as a direct consequence of \eqref{eq:z_exp_bounded} and H\"older's inequality that
	\begin{equation*}
		\lVert z(t) \rVert_X \leq k \lVert z_0 \rVert_X \e^{- \omega t } + k \lVert u_1 \rVert_{\L^q(0,t;U_1)}
	\end{equation*}
	holds for every $t \geq 0$, $q \in [2,\infty]$ and $u_1 \in \L^q(0,\infty;U) \cap \L^2_\omega(0,\infty;U)$. This is an $\L^p$-ISS-estimate, however, this does not mean that \ref{eq:BFS} is locally $\L^q$-ISS, since the equation only holds for small input functions in the intersection of $\L^q$ with the weighted space $\L^2_\omega$.
\end{remark}

\begin{remark}
	Condition \eqref{eq:choice_varepsilon} shows how $\varepsilon$ in \Cref{thm:global_existence_for_small_data} can be chosen, depending on the decay rate $\omega$, the constants $\lVert C \rVert_{\Lb(X)}$, $k_1$ and $k_2$ corresponding to the linearized system via \eqref{eq:z_exp_bounded_proof} and the constants $K$ and $p$ corresponding to nonlinearity $N$ via \eqref{eq:assumptions_on_N}. Condition \eqref{eq:choice_varepsilon} is not optimal for specific systems (see e.g.~\Cref{thm:Burgers_global}). However, it can be slightly improved by using sharper estimates than \eqref{eq:z_exp_bounded_proof} for the linear system involving the operator norms of the semigroup, the input maps, the output maps and the input-output maps.
\end{remark}

\begin{remark}
	Although the assumption that $A$ generates an exponentially stable $C_0$-semigroup in \Cref{thm:global_existence_for_small_data} may sound restrictive, it naturally appears when studying ISS of such bilinear systems and cannot be weakened in the view of our general abstract setting. Indeed, the trivial choices $N=0$ and $C=0$ lead to the linear system \ref{eq:LS} with $B_2=0$ from which we know that ISS (or, equivalently, local ISS) requires exponential stability of the semigroup. However, for particular nonlinearities, it might be possible  to  weaken the assumption on the semigroup. 
\end{remark}

\section{Global input-to-state stability for bilinear feedback systems} \label{section:ISS_bilin_feedback}

Throughout this section we assume, in addition to our standing assumptions from \Cref{section:bilinear_feedback_systems}, that $U_1,U_2,X,Y$ are Hilbert spaces.

We consider the following two assumptions:

\begin{assumption}\label{assump:A_self-adjoint}
	The operator $A$ is self-adjoint and strictly negative, i.e. there exists a constant $w_A<0$ such that
	\begin{equation*}
		\scpr{Az}{z}{X} \leq w_A \lVert z \rVert_{X}^2 \quad \text{ for all } z \in D(A),
	\end{equation*}
	$B_i \in \Lb(U_i;X_{-\frac{1}{2}})$ and $C \in \Lb(X_{\frac{1}{2}},Y)$, where the spaces $X_{\frac{1}{2}}$ and $X_{-\frac{1}{2}}$ are introduced below.
\end{assumption}

\begin{assumption}\label{assump:A_essentially_skew-adjoint}
	The operator $A$ is of the form $A=A_0 + L$, where $A_0$ is skew-adjoint and $L \in \Lb(X)$ is strictly dissipative, i.e. there exists a constant $w_A<0$ such that
	\begin{equation*}
		\real \scpr{Lz}{z}{X} \leq w_A \lVert z \rVert_{X}^2 \quad \text{ for all } z \in X,
	\end{equation*}
	$B_i \in \Lb(U_i,X)$, $i=1,2$, and $C \in \Lb(X,Y)$.
\end{assumption}

\begin{remark}
	Under \Cref{assump:A_essentially_skew-adjoint} we have for $z \in D(A)$ that 
	\begin{equation*}
		\real \scpr{Az}{z}{X} = \real \scpr{Lz}{z}{X} \leq w_A \lVert z \rVert_X^2.
	\end{equation*}
	Thus, it makes sense to denote the constant by $w_A$. In particular, $A$ is strictly dissipative and thus the generator of an exponentially stable $C_0$-semigroup.
\end{remark}

If $A$ is a self-adjoint and strictly negative operator on $X$, then it follows from \cite[Chapter~II,~Corollary~4.7 \& Chapter~IV,~Corollary~3.12]{engelnagel} and the fact that $\sigma(A) \subset (-\infty,w_A]$ holds that  $A$ is the generator of an exponentially stable, analytic semigroup on $X$.
One can define (c.f.~\cite[Section~3.4]{TucWei09}) the spaces $X_{\frac{1}{2}}$ as the completion of $D(A)$ with respect to the norm
\begin{equation}\label{eq:1/2_norm}
	\lVert z \rVert_{X_\frac{1}{2}} =  \sqrt{\scpr{(-A)^{\nicefrac{1}{2}}z}{(-A)^{\nicefrac{1}{2}}z}{X}}=\sqrt{\scpr{-Az}{z}{X}}, \quad z \in D(A)
\end{equation}
and $X_{-\frac{1}{2}}$ as its dual space with respect to the pivot space $X$.
We have the continuous and dense embeddings
\begin{equation*}
	X_1 \hookrightarrow X_{\frac{1}{2}} \hookrightarrow X \hookrightarrow X_{-\frac{1}{2}} \hookrightarrow X_{-1}.
\end{equation*}
From the concept of dual spaces with respect to a pivot space and the fact that $A$ is self-adjoint we obtain a natural continuous dual pairing $\dualp{\cdot}{\cdot}{X_{-\frac{1}{2}}}{X_{\frac{1}{2}}}: X_{-\frac{1}{2}} \times X_{\frac{1}{2}} \rightarrow \C$ which simplifies for $z \in X$ and $y \in X_{\frac{1}{2}}$ to
\begin{equation*}
	\dualp{z}{y}{X_{-\frac{1}{2}}}{X_{\frac{1}{2}}} = \scpr{z}{y}{X}.
\end{equation*}
Further, it is easy to see that $A_{-1}:X_{\frac{1}{2}} \rightarrow X_{-\frac{1}{2}}$ is isometric. This allows to extend \eqref{eq:1/2_norm} and the strict negativity inequality to
\begin{equation}\label{eq:A_strictly_negative_extended}
	\dualp{A_{-1}z}{z}{X_{-\frac{1}{2}}}{X_{\frac{1}{2}}} = -\lVert z \rVert_{X_{\frac{1}{2}}}^2 \leq w_A \lVert z \rVert_X^2,
\end{equation}
for every $z \in X_{\frac{1}{2}}$.

\begin{remark}
	We emphasize that \Cref{assump:A_self-adjoint} and \Cref{assump:A_essentially_skew-adjoint} guarantee that $A$ generates an exponentially stable $C_0$-semigroup and that \ref{eq:LS} is well-posed, see for instance \cite[Section~6]{TucWei14}. In particular, \Cref{thm:global_existence_for_small_data} is applicable under these assumptions.
\end{remark}

We will give an additional dissipation condition of the non-linearity to obtain a global ISS result for \ref{eq:BFS}. Before that, we briefly recall the result \cite[Theorem~7.6]{TucWei14} on the existence of a local in time solution for \ref{eq:BFS} and prove some natural regularity properties of this solution and its squared norm under \Cref{assump:A_self-adjoint} and \Cref{assump:A_essentially_skew-adjoint}.

\begin{lemma}\label{lem:local_in_time_solution}
	Suppose that $U_1,U_2,X,Y$ are Banach spaces and that \ref{eq:LS} is well-posed and that our standing assumptions on \ref{eq:BFS} hold.  Then, for every $z_0 \in X$ and $u_1 \in \L^2_{\rm loc}(0,\infty;U_1)$ there exists $t_1>0$ such that \ref{eq:BFS} admits on $[0,t_1]$ a unique solution $z \in C([0,t_1];X)$ and output $y \in \L^2(0,t_1,Y)$, that is, $z$ and $y$ satisfy \eqref{eq:bilin_feedback_solution_and_output} for every $t \in [0,t_1]$. For given $z_0$ and $u_1$, denote by $t_{\rm max}$ the supremum over all $t_1$ such that \ref{eq:BFS} admits a solution on $[0,t_1]$. If $t_{\rm max}$ is finite, then
	\begin{equation*}
		\lim_{t \nearrow t_{\rm max}} \lVert z(t) \rVert_X = \infty.
	\end{equation*}
\end{lemma}

\begin{proof}
	The reader is referred to \cite[Remark~7.5~\&~Theorem~7.6]{TucWei14} for a proof.
\end{proof}

\begin{lemma}\label{lem:representation_norm(z)^2_analytic}
	Let $U_1,U_2,X,Y$ be Hilbert spaces. In addition to our standing assumptions, suppose that \Cref{assump:A_self-adjoint} holds. For $z_0 \in X$ and $u_1 \in \L^2_{\rm loc}(0,\infty;U_1)$ let $[0,t_{\rm max})$ be the maximal existence interval of the solution $z$  of \ref{eq:BFS}. Then, it holds that $z\in H^1(0,t_{\rm max};X_{-\frac{1}{2}}) \cap C([0,t_{\rm max});X) \cap \L^2(0,t_{\rm max};X_{\frac{1}{2}})$ and
	\begin{align}\label{eq:representation_norm(z)^2_analytic}
		\begin{split}
			\MoveEqLeft \lVert z(t) \rVert_X^2 - \lVert z_0 \rVert_X^2 \\
			&= 2 \real \int_0^t   \dualp{Az(s)}{z(s)}{X_{-\frac{1}{2}}}{X_\frac{1}{2}}  + \scpr{u_1(s)}{B_1^*z(s)}{U_1} \\
			& \qquad \qquad \qquad \qquad \qquad + \scpr{N(z(s),Cz(s))}{B_2^*z(s)}{U_2} \d s.
		\end{split}
	\end{align}
	for all $t \in [0,t_{\rm max})$. 
\end{lemma}

\begin{proof}
	Since we are dealing with an exponentially stable analytic semigroup on a Hilbert space, its maximal regularity property (see e.g.~\cite[Part~II,~Chapter~1,~Example~4.3 \& Chapter~3,~Theorem~2.1]{BenPratDefMit}) yields for the linear system with input data $z_0 \in X$ and $u_i \in \L^2(0,\infty;U_i)$, $i=1,2$, a solution $z \in \H^1(0,\infty;X_{-\frac{1}{2}}) \cap C([0,\infty);X) \cap \L^2(0,\infty;X_\frac{1}{2})$. Moreover, $z$ depends continuously in $C([0,t];X)$ and $\L^2(0,t;X_{\frac{1}{2}})$ on the input data $z_0 \in X$ and $u_i \in \L^2(0,t;U_i)$, $i=1,2$, for every $t>0$. This implies that $A_{-1}z$ depends continuously in $\L^2(0,t;X_{-\frac{1}{2}})$ on the input data. \\
	Now, let $z_0 \in X_1$ and $u_i \in \H^1(0,\infty;U_i)$, $i=1,2$, with $u_i(0)=0$. Thus, $z$ is a classical solution of \ref{eq:LS}, i.e. $z \in C^1([0,\infty);X) \cap C([0,\infty);X_1)$, see \cite[PART~II,~Chapter~2,~Proposition~3.3]{BenPratDefMit}. Therefore, we obtain
	\begin{align*}
		\MoveEqLeft \frac{\d}{\d t} \lVert z(t) \rVert_X^2\\
		&= 2 \real \scpr{\dot{z}(t)}{z(t)}{X} \\
		&= 2 \real \scpr{Az(t) + B_1 u_1(t) + B_2u_2(t)}{z(t)}{X}\\
		&=2 \real \left( \scpr{Az(t)}{z(t)}{X} + \scpr{ u_1(t)}{B_1^* z(t)}{U_1} + \scpr{ u_2(t)}{ B_2^*z(t)}{U_2} \right).
	\end{align*}
	Integration over $[0,t]$ yields
	\begin{align}\label{eq:proof_representation_norm(z)^2_LS_analytic}
		\begin{split}
			\MoveEqLeft \lVert z(t) \rVert_X^2 - \lVert z_0 \rVert_X^2 \\
			&=   2 \real \int_0^t \scpr{Az(s)}{z(s)}{X} + \scpr{u_1(s)}{B_1^*z(s)}{U_1} + \scpr{u_2(s)}{B_2^*z(s)}{U_2} \d s.
		\end{split}
	\end{align}
	Since $\scpr{Az(s)}{z(s)}{X} = \dualp{Az(s)}{z(s)}{X_{-\frac{1}{2}}}{X_\frac{1}{2}}$ for the classical solution $z$, we obtain from the previously mentioned continuous dependencies and density of $D(A)$ in $X$ and $\{ u \in \H^1(0,t;U_i) \mid u(0)=0 \}$ in $\L^2(0,t;U_i)$ for every $t \geq 0$ that \eqref{eq:proof_representation_norm(z)^2_LS_analytic} holds for all initial data $z_0 \in X$ and $u_i \in \L^2(0,\infty;U_i)$, $i=1,2$.\\
	
	Now, let $z_0 \in X$ and $u_1 \in \L^2_{\rm loc}(0,\infty;U_1)$ and denote by $z \in C([0,t_{\rm max});X)$ the corresponding solution from \Cref{lem:local_in_time_solution} on $[0,t_{\rm max})$. Changing $u_1$ on $[t_{\rm max},\infty)$ does not affect $z$, thus, we assume without loss of generality that $u_1 \in \L^2(0,\infty;U_1)$. Set $u_2=N(z,Cz) \in \L^2(0,t_{\rm max};U_2)$ and extend it by $0$ to a function in $\L^2(0,\infty;U_2)$. Hence, $z$ is also the restriction to $[0,t_{\rm max})$ of the solution of the linear system \ref{eq:LS} for $x_0$, $u_1$ and $u_2$. Consequently, $z$ has the desired regularity properties and \eqref{eq:proof_representation_norm(z)^2_LS_analytic} holds for $t \in [0,t_{\rm max})$ with $u_2=N(z,Cz)$, which completes the proof.
\end{proof}

\begin{lemma}\label{lem:representation_norm(z)^2_skew}
	Let $U_1,U_2,X,Y$ be Hilbert spaces. In addition to our standing assumptions, suppose that \Cref{assump:A_essentially_skew-adjoint} holds. For $z_0 \in X$ and $u_1 \in \L^2_{\rm loc}(0,\infty;U_1)$, let $t_1>0$ such that \ref{eq:BFS} admits a unique solution $z$ on $[0,t_1]$. For $t \in [0,t_1]$ it holds that
	\begin{align}\label{eq:representation_norm(z)^2_skew}
		\begin{split}
			\MoveEqLeft \lVert z(t) \rVert_X^2 - \lVert z_0 \rVert_X^2 \\
			&=  2 \real \int_0^t \scpr{Lz(s)}{z(s)}{X} + \scpr{u_1(s)}{B_1^*z(s)}{U_1} \\
			& \qquad \qquad \qquad \quad +\scpr{N(z(s),Cz(s))}{B_2^*z(s)}{U_2} \d s.
		\end{split}
	\end{align}
\end{lemma}

\begin{proof}
	Note  that since $A=A_0 + L$ with bounded $L$ and skew-adjoint $A_0$, one has $\real \scpr{Az}{z}{X} = \real \scpr{Lz}{z}{X}$ for all $z \in D(A)=D(A_0)$. With this at hand, it is not difficult to see that the proof is similar to the one for \Cref{lem:representation_norm(z)^2_analytic}. Indeed, we only have to ensure the continuous dependency of the solution of the linear system with respect to the initial data in $C([0,t];X)$. But this is clear from the boundedness of $B_1$, $B_2$ and $C$.
\end{proof}

If we consider input data with the smallness condition \eqref{eq:small_initial_data} from \Cref{thm:global_existence_for_small_data} in \Cref{lem:representation_norm(z)^2_analytic} or \Cref{lem:representation_norm(z)^2_skew}, we clearly obtain that $t_{\rm max}=\infty$. Under an additional dissipation condition on the nonlinear part, we  can get rid of the smallness condition to obtain $t_{\rm max} =\infty$ and global ISS results. This if formulated in the following theorem.

\begin{theorem}\label{thm:ISS_for_BFS}
	In addition to our standing assumptions, suppose that either \Cref{assump:A_self-adjoint} or \Cref{assump:A_essentially_skew-adjoint} holds and let $w_A<0$ such that $\real \scpr{Az}{z}{X} \leq w_A \lVert z \rVert_X^2$ for all $z \in D(A)$. If there exists $m_1,m_2 \in \R$ satisfying
	\begin{equation*}
		1-m_1 >0 \quad \text{ and } \quad (1-m_1)w_A + m_2 < 0
	\end{equation*}
	such that for all $z \in D(A)$ it holds that
	\begin{equation}\label{eq:decay_property_B2N}
		\lvert \scpr{N(z,Cz)}{B_2^*z}{U_2} \rvert \leq -m_1  \real \scpr{Az}{z}{X} + m_2 \lVert z \rVert_X^2,
	\end{equation}
	then there exist constants $c,\nu>0$ such that \ref{eq:BFS} admits for all $z_0 \in X$ and $u_1 \in \L^2_{\rm loc}(0,\infty;U_1)$ a unique solution $z$ which satisfies for all $t \geq 0$ that
	\begin{equation}\label{eq:local_L2_omega_ISS_estimmate}
		\lVert z(t) \rVert_X \leq \lVert z_0 \rVert_X \e^{- \nu t} + c \, \e^{- \nu t} \lVert \e^{\nu \cdot}u_1  \rVert_{\L^2(0,t;U_1)}.
	\end{equation}
	
	Moreover, \ref{eq:BFS} is $\L^q$-ISS for all $q \in [2,\infty]$.
\end{theorem}

\begin{proof}
	For any $x_0 \in X$ and $u_1 \in \L^2_{\rm loc}(0,\infty;U_1)$ there exists $t_1>0$ such that \ref{eq:BFS} admits on $[0,t_1]$ a unique solution $x \in C([0,t_1];X)$ and output $y\in \L^2(0,t_1;Y)$ by \Cref{lem:local_in_time_solution}. Moreover, it suffices to prove \eqref{eq:local_L2_omega_ISS_estimmate} on $[0,t_1]$. Indeed, then the solution stays bounded on its existence interval and by \Cref{lem:local_in_time_solution}, it can be extended to a solution on $[0,\infty)$. Note that this solution satisfies \eqref{eq:local_L2_omega_ISS_estimmate} on $[0,\infty)$.

	First consider \Cref{assump:A_self-adjoint}. From the regularity of $z$ derived in \Cref{lem:representation_norm(z)^2_analytic} and the continuous dependency of
	\eqref{eq:decay_property_B2N} on $z \in X_{\frac{1}{2}}$ it follows for almost every $t \geq 0$ that
	\begin{equation*}
		\real \scpr{N(z(t),Cz(t))}{B_2^*z(t)}{U_2} \leq -m_1 \real \dualp{A_{-1}z(t)}{z(t)}{X_{-\frac{1}{2}}}{X_\frac{1}{2}}  + m_2 \lVert z(t) \rVert_X^2.
	\end{equation*}
	\Cref{lem:representation_norm(z)^2_analytic} also yields that $\lVert z \rVert_X^2$ is almost everywhere differentiable and
	\begin{align}\label{eq:ineq_diff_gronwall_self-adjoint}
		\begin{split}
			\MoveEqLeft \frac{1}{2} \frac{\d}{\d t}\lVert z(t) \rVert_X^2 \\
			&= \real \Big( \dualp{A_{-1}z(t)}{z(t)}{X_{-\frac{1}{2}}}{X_{\frac{1}{2}}}  + \scpr{u_1(t)}{B_1^*z(t)}{U_1} \\
			& \qquad \qquad+ \scpr{N(z(t),Cz(t))}{B_2^*z(t)}{U_2} \Big)\\
			&\leq (1-m_1) (-\lVert z \rVert_{X_\frac{1}{2}}^2) + m_2 \lVert z(t) \rVert_X^2 \\
			& \qquad \qquad + \lVert B_1^* \rVert_{\Lb(X_\frac{1}{2},U_1)} \lVert u_1(t) \rVert_{U_1} \lVert z(t) \rVert_{X_{\frac{1}{2}}} \\
			&\leq [(1-m_1 -\mu) w_A + m_2] \lVert z(t) \rVert_X^2 + \frac{\lVert B_1^* \rVert_{\Lb(X_\frac{1}{2},U_1)}^2}{4 \mu} \lVert u_1(t) \rVert_{U_1}^2\\
		\end{split}
	\end{align}
	for $\mu >0$ such that $1-m_1-\mu>0$ and $- \nu \coloneqq [(1-m_1-\mu) w_A + m_2]<0 $, where we applied \eqref{eq:A_strictly_negative_extended} in the first inequality and Young's inequality with $q=q'=2$ in the last one. Gronwall's inequality yields that
	\begin{equation*}
		\lVert z(t) \rVert^2 \leq  \lVert z_0 \rVert_X^2 \e^{-2 \nu t} + \frac{\lVert B_1^* \rVert_{\Lb(X_\frac{1}{2},U)}^2}{2 \mu}\int_0^t  \lVert u(s) \rVert_{U_1}^2 \e^{-2 \nu (t-s)} \d s
	\end{equation*}
	and hence, the required estimate follows,
	\begin{equation*}
		\lVert z(t) \rVert_X \leq \lVert z_0 \rVert \e^{-\nu t} + \left(\frac{\lVert B_1^* \rVert_{\Lb(X_\frac{1}{2},U)}^2}{2 \mu}\right)^{\nicefrac{1}{2}}  \e^{-\nu t} \lVert u_1 \e^{\nu \cdot} \rVert_{\L^2(0,t;U_1)}.
	\end{equation*}
	
	If \Cref{assump:A_essentially_skew-adjoint} holds, we obtain an analog estimate to \eqref{eq:ineq_diff_gronwall_self-adjoint} by using \Cref{lem:representation_norm(z)^2_skew}, replacing $\dualp{A_{-1}z(t)}{z(t)}{X_{-\frac{1}{2}}}{X_\frac{1}{2}}$ by $\scpr{Lz(t)}{z(t)}{X}$, $\lVert B_1^*\rVert_{\Lb(X_{\frac{1}{2}},U)}$ by $\lVert B_1^* \rVert_{\Lb(X,U)}$ and by using the strict dissipativity of $L$ instead of \eqref{eq:A_strictly_negative_extended}. In this case, Gronwall's inequality yields that
	\begin{equation*}
		\lVert z(t) \rVert_X \leq \lVert z_0 \rVert \e^{-\nu t} + \left(\frac{\lVert B_1^* \rVert_{\Lb(X,U_1)}^2}{2 \mu}\right)^{\nicefrac{1}{2}}  \e^{-\nu t} \lVert u_1 \e^{\nu \cdot} \rVert_{\L^2(0,t;U_1)}.
	\end{equation*}
	
	That \ref{eq:BFS} is $\L^q$-ISS for any $q \in [2,\infty]$ follows from the fact that $\L^q(0,\infty;U_1) \subseteq \L^2_{\rm loc}(0,\infty;U_1)$ and H\"older's inequality, which implies for every $t \geq 0$ that
	\begin{equation}
		\e^{- \nu t} \lVert u_1 \e^{\nu \cdot} \rVert_{\L^2(0,t;U_1)} \leq c \lVert u_1 \rVert_{\L^q(0,t;U_1)}
	\end{equation}
	for some absolute constant $c>0$ independent of $t$.
\end{proof}

\begin{remark}
	In the proof of \Cref{thm:ISS_for_BFS} we proved that $V:X \rightarrow \R$, $V(z)= \lVert z \rVert_X^2$ is an ISS-Lyapunov function for \ref{eq:BFS} under the additional assumption \eqref{eq:decay_property_B2N}.  This assumption has been used in \cite{Schw20} to derive ISS estimates for parabolic semilinear boundary control systems with (time-depending) semilinearities mapping an interpolation space between $D(A)$ and $X$, related to the fractional power of the operator $-A$, boundedly into $X$. Compared to our setting, neither feedback nor unboundedness of the nonlinearity, in the sense of the presence of an unbounded operator $B_2$, are considered.
\end{remark}

\section{Examples}\label{section:examples}

\subsection{The Burgers equation}

Stability of the viscous Burger equation has been studied in several works, such as \cite{Krstic_BurgersBoundary, LyMeTi,Sachdev}, to name only a few of them. In \cite{ZhengZhu_Burgerseq}, local ISS with respect to $\L^\infty$ of a Burgers equation on the state space $\L^2(0,1)$ with in-domain and boundary controls/disturbances is proven under additional regularity assumptions on the controls/disturbances corresponding to the used solution concept of classical solutions.

We consider the following controlled viscous Burgers equation with Dirichlet boundary conditions for $t \in [0,\infty)$ and $x \in (0,1)$

\begin{equation}\label{eq:Burgers}
	\left\{\begin{aligned}
		\dot{z}(t,x) &= \frac{\partial^2z}{\partial x^2}(t,x) - z(t,x) \frac{\partial z}{\partial x}(t,x) + u_1(t,x),\\
		z(t,0)&=z(t,1)=0,\\
		z(0,x) &=z_0(x),\\
		y(t,x) &= z(t,x).
	\end{aligned}\right.
\end{equation}

We apply the results from \Cref{section:ISS_bilin_feedback} to the above Burgers equation, considered once on the state space $\H_0^1(0,1)$ and once on the state space $\L^2(0,1)$.

First, let the state, input and output spaces be given as in \cite[Section~8]{TucWei14}, i.e.
\begin{align}\label{eq:spaces_Burgers_eq_H1}
	\begin{split}
		X &= \H_0^1(0,1),\\
		U_1 &=U_2 = \L^2(0,1),\\
		Y&=\H^2(0,1) \cap H_0^1(0,1),
	\end{split}
\end{align}
where all spaces are assumed to be real valued. We equip $\H_0^1(0,1)$ with the norm
\begin{equation*}
	\lVert z \rVert_{\H_0^1(0,1)}= \lVert \tfrac{\d z}{\d x} \rVert_{\L^2(0,1)}.
\end{equation*}
It follows from the Poincaré inequality that this defines a norm which is equivalent to the standard norm on $\H_0^1(0,1)$.\\ 
Let the operator $A$ on $X$ be defined by
\begin{align*}
	A \varphi = \frac{\d^2 \varphi}{\d x^2}, \quad 
	D(A) = \{ \varphi \in \H^3(0,1) \mid \varphi, \tfrac{\d^2 \varphi}{\d x^2} \in \H_0^1(0,1) \}.
\end{align*}
It is known that $A$ is a self-adjoint and strictly negative.

Note that (see \cite[Section~8]{TucWei14} and the references therein)
\begin{equation*}
	X_{\frac{1}{2}} = \H^2(0,1) \cap \H_0^1(0,1) \quad \text{ and } \quad X_{-\frac{1}{2}}= \L^2(0,1).
\end{equation*}
Further, we consider the operators $B_i \in \Lb(U_i,X_{-\frac{1}{2}})$, $i=1,2$ and $C \in \Lb(X_\frac{1}{2},Y)$ to be the identity on the respective spaces. The bilinear feedback law is defined by
\begin{equation*}
	N:X \times Y \rightarrow U_2, \quad N(z,y) = -z \frac{\d y}{\d x}.
\end{equation*}
The validity of \eqref{eq:assumptions_on_N} for any $p \in (0,1)$ is not difficult to check and can also be found in \cite[Proof~of~Proposition~8.4]{TucWei14}. Thus, \Cref{thm:global_existence_for_small_data} together with \Cref{lem:representation_norm(z)^2_analytic} yields the following result

\begin{theorem}
	The Burgers equation \eqref{eq:Burgers} with spaces as in \eqref{eq:spaces_Burgers_eq_H1} and operators as above is a bilinear feedback system of the form \ref{eq:BFS}. Moreover, there exist $\omega, \varepsilon>0$ such that \eqref{eq:Burgers} admits for all $z_0 \in \H_0^1(0,1)$ and $u_1 \in \L^2_\omega(0,\infty;\L^2(0,1))$ with
	\begin{equation*}
		\lVert z_0 \rVert_{\H_0^1(0,1)} + \lVert u_1 \rVert_{\L^2_\omega(0,\infty;\L^2(0,1))} \leq \varepsilon
	\end{equation*}
	a unique solution $z \in \H^1(0,\infty;\L^2(0,1)) \cap C([0,\infty;\H_0^1(\Omega)) \cap \L^2(0,\infty;\H^2(0,1))$ which satisfies for some $k>0$ and every $t \geq 0$ that
	\begin{equation*}
		\lVert z(t) \rVert_X \leq k \lVert z_0 \rVert_{\H_0^1(0,1)} \e^{- \omega t} + k \e^{- \omega t } \lVert u_1 \e^{\omega \cdot} \rVert_{\L^2(0,t;\L^2(0,1))}.
	\end{equation*}
	In particular, \eqref{eq:Burgers} is locally $\L^2_\omega$-ISS.
\end{theorem}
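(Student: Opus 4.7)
The plan is to verify that the Burgers equation \eqref{eq:Burgers} fits precisely into the abstract framework of \ref{eq:BFS} under \Cref{assump:A_self-adjoint}, and then invoke \Cref{thm:ISS_for_BFS} together with \Cref{lem:representation_norm(z)^2_analytic} directly. The conclusion, including the regularity $z\in \H^1(0,\infty;\L^2(0,1))\cap C([0,\infty);\H_0^1(0,1))\cap \L^2(0,\infty;\H^2(0,1))$, will then be merely a rewriting of what those results guarantee after we have identified the interpolation/extrapolation spaces.

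First I would verify that $A$ defined on $X=\H_0^1(0,1)$ with the stated domain is self-adjoint and strictly negative. Integration by parts shows symmetry and the Poincar\'e inequality gives $\scpr{A\varphi}{\varphi}{X}=-\lVert \varphi''\rVert_{\L^2}^2\le w_A\lVert\varphi\rVert_X^2$ for some $w_A<0$. One checks that $A$ is indeed closed with dense range, so that self-adjointness follows. With the norm on $X$ specified via $\lVert\cdot\rVert_{\H_0^1}=\lVert(\cdot)'\rVert_{\L^2}$, a direct computation identifies $X_{1/2}=\H^2(0,1)\cap \H_0^1(0,1)$ (so that $\lVert\varphi\rVert_{X_{1/2}}^2=\scpr{-A\varphi}{\varphi}{X}=\lVert\varphi''\rVert_{\L^2}^2$), and consequently, by duality with pivot $X$, $X_{-1/2}=\L^2(0,1)$; the detailed verification is also recorded in \cite{TucWei14}. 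Thus the identity maps $B_1,B_2\colon \L^2(0,1)\to \L^2(0,1)=X_{-1/2}$ and $C\colon X_{1/2}\to Y$ are trivially bounded, matching the operator requirements of \Cref{assump:A_self-adjoint}.

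The main technical point is checking that $N(z,y)=-z\,\tfrac{\d y}{\d x}$ is a continuous bilinear map $X\times Y\to U_2$ satisfying \eqref{eq:assumptions_on_N}. In one space dimension the Sobolev embedding $\H_0^1(0,1)\hookrightarrow C([0,1])$ gives $\lVert z\rVert_{\L^\infty}\le c_1\lVert z\rVert_{X}$, so
\begin{equation*}
\lVert N(z,y)\rVert_{\L^2}\le \lVert z\rVert_{\L^\infty}\lVert y'\rVert_{\L^2}\le c_1 \lVert z\rVert_X \lVert y\rVert_X.
\end{equation*}
Since $Y\hookrightarrow X$ continuously, for any $p\in(0,1)$ one can split $\lVert y\rVert_X=\lVert y\rVert_X^{1-p}\lVert y\rVert_X^p\le c_2^p\lVert y\rVert_X^{1-p}\lVert y\rVert_Y^p$, so \eqref{eq:assumptions_on_N} is satisfied with, say, $p=\tfrac{1}{2}$ and $K=c_1 c_2^{1/2}$. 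I expect this bilinear bound to be the only spot requiring actual inequality-work; the rest is bookkeeping.

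With all the hypotheses of \Cref{assump:A_self-adjoint} in place and $N$ satisfying \eqref{eq:assumptions_on_N}, \Cref{lem:representation_norm(z)^2_analytic} gives well-posedness of the associated linear system \ref{eq:LS}, while \Cref{thm:ISS_for_BFS} applied under \Cref{assump:A_self-adjoint} yields constants $\omega,\varepsilon>0$ such that for every initial datum and input satisfying the smallness assumption \eqref{eq:small_initial_data} there exists a unique solution $z$ with the announced ISS estimate (after possibly relabelling $\nu$ as $\omega$, using the last sentence of \Cref{thm:ISS_for_BFS}). The regularity class claimed in the statement is exactly what \Cref{lem:representation_norm(z)^2_analytic} produces once $X$, $X_{1/2}$, $X_{-1/2}$ are replaced by the concrete spaces identified in the first step. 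Local $\L^2_\omega$-ISS then follows by definition.
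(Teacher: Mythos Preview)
Your proposal is correct and follows essentially the same route as the paper: the paper likewise just verifies that $A$ is self-adjoint and strictly negative on $X=\H_0^1(0,1)$, identifies $X_{\frac12}=\H^2\cap\H_0^1$ and $X_{-\frac12}=\L^2$, notes that $B_1,B_2,C$ are identities and that $N$ satisfies \eqref{eq:assumptions_on_N} for any $p\in(0,1)$ (deferring details to \cite{TucWei14}), and then invokes \Cref{thm:ISS_for_BFS} together with \Cref{lem:representation_norm(z)^2_analytic}. Your write-up in fact supplies slightly more detail on the bilinear estimate than the paper does.
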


\begin{remark}
	In \cite[Theorem~8.1]{TucWei14} the authors proved that the Burgers equation admits global solutions for all input data $z_0 \in \H_0^1(0,1)$ and $u_1 \in \L^2(0,\infty;\L^2(0,1))$. Unfortunately, \eqref{eq:decay_property_B2N} does not hold for all $z \in D(A)$, so our methods does not guarantee a global $\L^2$-ISS-estimate for input data which are not small if we consider the equation on the spaces from \eqref{eq:spaces_Burgers_eq_H1}.
\end{remark}

Now, let us consider \eqref{eq:Burgers} with the real-valued spaces
\begin{align}\label{eq:spaces_Burgers_eq_L2}
	\begin{split}
		X &= \L^2(0,1),\\
		U_1&=U_2 = \H^{-1}(0,1),\\
		Y &= H_0^1(0,1).
	\end{split}
\end{align}
Let $A$ be given by
\begin{align*}
	A\varphi = \frac{\d^2 \varphi}{\d x^2}, \quad D(A) &= \H^2(0,1) \cap \H_0^1(0,1).
\end{align*}
As before, $A$ is self-adjoint and strictly negative, and we obtain
\begin{equation*}
	X_{\frac{1}{2}} = H_0^1(0,1) \quad \text{and} \quad X_{-\frac{1}{2}} = \H^{-1}(0,1).
\end{equation*}
The operators $B_i \in \Lb(U_i,X_{-\frac{1}{2}})$ and $C \in \Lb(X_{\frac{1}{2}},Y)$ are considered to be the identity on the respective spaces. The bilinear feedback law given by
\begin{equation*}
	N :X \times Y \rightarrow U_2, \quad  N(z,y) = -\frac{1}{2} \frac{\d (zy)}{\d x} 
\end{equation*}
satisfies \eqref{eq:assumptions_on_N}. Indeed, for $z \in X$ and $y \in Y$, the continuity of the embedding $\H^{s}(0,1) \hookrightarrow C([0,1])$ for $s \in (\frac{1}{2},1)$, see e.g. \cite[Theorem~7.63]{Adams} or \cite[Theorem~8.2]{hitchhikersobolev}, and the classical interpolation result \cite[Corollary~1.7~\&~Example~1.10]{Lunardi} imply for $\alpha \in (0,\frac{1}{2})$ that
\begin{equation*}
	\lVert y \rVert_{C([0,1])} \leq K \lVert y \rVert_{\H^{\frac{1}{2} + \alpha }} \leq K \lVert y \rVert_{L^2(0,1)}^{1-p} \lVert y \rVert_{\H^1(0,1)}^p
\end{equation*}
with $p= \frac{1}{2} + \alpha \in (0,1)$ and hence,
\begin{equation*}
	\lVert N(z,y) \rVert_{U_2} \leq \frac{1}{2} \lVert zy \rVert_{L^2(0,1)} \leq \frac{1}{2} K \lVert z \rVert_{L^2(0,1)}  \lVert y \rVert_{L^2(0,1)}^{1-p} \lVert y \rVert_{\H^1(0,1)}^p.
\end{equation*}
For $z \in D(A)$ there holds that
\begin{equation*}
	\langle N(z,Cz),z \rangle_{\L^2(0,1)} = \frac{1}{3} \int_0^1  \frac{\d z^3}{\d x}(x) \d x = \frac{1}{3}(z(1)-z(0))=0,
\end{equation*}
which allows us to apply \Cref{thm:ISS_for_BFS}. Combing this with \Cref{lem:representation_norm(z)^2_analytic} yields:

\begin{theorem}\label{thm:Burgers_global}
	The Burgers equation \eqref{eq:Burgers} with spaces as in \eqref{eq:spaces_Burgers_eq_L2} and operators as above is a bilinear feedback system of the form \ref{eq:BFS}. Moreover, \eqref{eq:Burgers} admits for all $z_0 \in \L^2(0,1)$ and $u_1 \in \L^2(0,\infty;\H^{-1}(0,1))$ a unique solution $z \in \H^1(0,\infty;\H^{-1}(0,1)) \cap C([0,\infty);\L^2(0,1)) \cap \L^2(0,\infty;\H_0^1((0,1)))$, which satisfies for some $\nu,c>0$ and every $t \geq 0$ that
	\begin{equation*}
		\lVert z(t) \rVert_{\L^2(0,1)} \leq \lVert z_0 \rVert_{\L^2(0,1)} \e^{-\nu t} + c \e^{-\nu t} \lVert \e^{\nu \cdot} u_1 \rVert_{\L^2(0,t;\H^{-1}(0,1))}.
	\end{equation*}
	In particular, \eqref{eq:Burgers} is $\L^q$-ISS for every $q \in [2,\infty]$.
\end{theorem}

Such an ISS estimate for the Burgers equation on the state space $\L^2$ is not new, but we want to emphasize that the abstract operator-theoretic methods derived in this note also cover this type of examples.

\subsection{The Schr\"odinger equation}

We consider the following controlled Schrö\-din\-ger for $t \in[0,\infty)$ and $x \in (0,1)$

\begin{equation}\label{eq:Schrodinger}
	\left\{
	\begin{aligned}
		\dot{z}(t,x) &= i \frac{\partial^2 z}{\partial x^2}(t,x) - z(t,x) + (z(t,x))^2 + u_1(t,x),\\
		z(t,0) &=z(t,1)=0,\\
		z(0,x) &= z_0,\\
		y(t,x) &= z(t,x).
	\end{aligned}\right.
\end{equation}
We take the spaces as in \eqref{eq:spaces_Burgers_eq_H1}, which are here assumed to be complex valued. We define the operator $A: D(A) \subseteq X \rightarrow X$ by 
\begin{equation*}
	A \varphi = \iu \frac{\d^2 \varphi}{\d x^2} - \varphi, \quad \varphi \in D(A) = \left\{ \varphi \in \H^3(0,1) \mid \varphi, \tfrac{\d^2 \varphi}{\d x^2} \in \H_0^1(0,1) \right\}.\\
\end{equation*}
Hence, $A$ is the generator of an exponentially stable semigroup. With the input and output spaces $U_1=U_2=Y=X$ and the bounded operators $B_1=B_2=C=I$ the linear system \ref{eq:LS} becomes well-posed. Let $N:X \times Y \rightarrow U_2$ be defined by
\begin{equation*}
	N(z,y)= zy.
\end{equation*}
Thus, $N$ satisfies \eqref{eq:assumptions_on_N} for any $p \in (0,1)$.
Indeed, it follows from the continuous embedding $\H^1(0,1) \hookrightarrow C([0,1])$ with embedding constant $c>0$ that
\begin{equation*}
	\lVert N(z,y) \rVert_{\H^1(0,1)} \leq \lVert \tfrac{\d z}{\d x}y \lVert_{\L^2(0,1)} + \lVert z \tfrac{\d y}{\d x} \rVert_{\L^2(0,1)} \leq 2 c \lVert z \rVert_{\H^1(0,1)} \lVert y \rVert_{H^1(0,1)}.
\end{equation*}
Hence, \Cref{thm:global_existence_for_small_data} is applicable and we obtain the following.

\begin{theorem}
	The Schr\"odinger equation \eqref{eq:Schrodinger} is a bilinear feedback system of the form \ref{eq:BFS} with the above spaces and operators. Moreover, there exist $\omega,\varepsilon>0$ such that \eqref{eq:Schrodinger} admits for all $z_0 \in \H_0^1(0,1)$ and $u_1 \in \L^2_\omega(0,\infty;\L^2(0,1))$ with
	\begin{equation*}
		\lVert z_0 \rVert_{\H_0^1(0,1)} + \lVert u_1 \rVert_{\L^2_\omega(0,\infty;\L^2(0,1))} \leq \varepsilon,
	\end{equation*}
	a unique solution $z \in C([0,\infty);\H_0^1(0,1))$, which satisfies for some $k>0$ and every $t \geq 0$ that
	\begin{equation*}
		\lVert z(t) \rVert_{\H_0^1(0,1)} \leq k \lVert z_0 \rVert_{\H_0^1(0,1)} \e^{- \omega t} + k\e^{-\omega t} \lVert u_1 \e^{\omega \cdot} \rVert_{\L^2(0,t;\L^2(0,1))}.
	\end{equation*}
	In particular, \eqref{eq:Schrodinger} is locally $\L^2_\omega$-ISS.
\end{theorem}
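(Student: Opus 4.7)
The strategy is a direct verification of \Cref{assump:A_essentially_skew-adjoint}, after which \Cref{thm:ISS_for_BFS} delivers the conclusion with no further work.

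First I would verify the structural decomposition $A = A_0 + L$ with $A_0$ skew-adjoint and $L$ bounded and strictly dissipative on $X = \H_0^1(0,1)$. The operator $L = -I$ is clearly bounded, and $\real \scpr{Lz}{z}{X} = -\lVert z \rVert_X^2$ gives strict dissipativity with $w_A = -1$. For $A_0 = \iu \partial^2/\partial x^2$ on the indicated domain, skew-symmetry on $X$ follows by integration by parts: the two conditions $z, z'' \in \H_0^1(0,1)$ built into $D(A_0)$ together with the anti-linearity of complex conjugation kill both boundary contributions arising in $\scpr{A_0 z}{w}{X} = \scpr{(\iu z'')'}{w'}{\L^2(0,1)}$. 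The range condition upgrading skew-symmetry to skew-adjointness can be reduced to the well-known self-adjointness of the Dirichlet Laplacian on $\L^2(0,1)$ via the functional calculus associated with the sine basis (equivalently, $A_0$ is the generator of the unitary Schr\"odinger group on $X$, which is standard).

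Next I would dispatch the remaining hypotheses. With $U_1 = U_2 = Y = X$ and $B_1 = B_2 = C = I$, the boundedness $B_i \in \Lb(U_i, X)$ and $C \in \Lb(X, Y)$, the embedding $Y \hookrightarrow X$, and the extension of $C$ to $\Lb(X)$ are all trivial. The bilinear estimate \eqref{eq:assumptions_on_N} for $N(z,y) = zy$ is exactly the one already displayed in the paragraph preceding the theorem: the Leibniz rule and the Sobolev embedding $\H^1(0,1) \hookrightarrow C([0,1])$ give $\lVert N(z,y) \rVert_X \leq 2c \lVert z \rVert_X \lVert y \rVert_X$, which, since $X = Y$ here, matches \eqref{eq:assumptions_on_N} with $K = 2c$ and any $p \in (0,1)$.

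With \Cref{assump:A_essentially_skew-adjoint} in force, \Cref{thm:ISS_for_BFS} applies directly and produces constants $\omega, \varepsilon > 0$, a unique solution $z \in C([0,\infty); \H_0^1(0,1))$ for every pair $(z_0, u_1)$ meeting the smallness bound, and the claimed local $\L^2_\omega$-ISS estimate. The only step requiring any genuine care is the skew-adjointness of $A_0$ on the Hilbert space $\H_0^1(0,1)$ (rather than on the more familiar $\L^2(0,1)$), where one must be careful that the chosen domain makes both the symmetry computation and the range condition go through; once this is settled, the result follows as an immediate application of the abstract theorem.
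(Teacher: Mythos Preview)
Your proposal is correct and follows exactly the paper's approach: the paper verifies \Cref{assump:A_essentially_skew-adjoint} (the decomposition $A=A_0-I$ with $A_0$ skew-adjoint and $L=-I$ strictly dissipative, bounded $B_i=C=I$, and the bilinear bound $\lVert zy\rVert_{\H^1}\le 2c\lVert z\rVert_{\H^1}\lVert y\rVert_{\H^1}$ via Leibniz and Sobolev embedding) in the paragraph preceding the theorem and then simply writes ``\Cref{thm:ISS_for_BFS} yields'' as its entire proof. Your write-up adds useful detail on the skew-adjointness of $A_0$ on $\H_0^1$, but the argument is identical.
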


\subsection{The Navier--Stokes equations}

The following example of the Navier--Stokes equations are taken from \cite{TucWei14}. There, the authors already considered the Navier--Stokes equations as a bilinear feedback system to prove local in time well-posedness with well-posedness methods. All operator theoretic statements used in this section can be found in \cite[Section~9]{TucWei14} and the references therein.\medskip

Let $\Omega \subseteq \R^3$ be a bounded domain with $C^2$ boundary $\partial \Omega$. The Navier--Stokes equations read for $t \in [0,\infty)$ and $x \in \Omega$

\begin{equation}\label{eq:Navier_Stokes}
	\left\{\begin{aligned}
		\rho \dot{z}(t,x) - \nu \Delta z(t,x) + \rho[(z \cdot \nabla)z](t,x) + \nabla p(t,x) &= u_1(t,x)\\
		\Div z(t,x) &= 0,\\
		z_{\vert_{x \in \partial \Omega}}(t,x) &= 0,\\
		z(0,x) &= z_0(x).
	\end{aligned}\right.
\end{equation}
The Navier--Stokes system describes the motion of an incompressible viscous fluid in the bounded domain $\Omega$. The Eulerian velocity field of the fluid $z$ and the pressure field in the fluid $p$ are unknown, while the density $\rho$ and $\nu$ the viscosity of the fluid  are given positive constants. By $P$ we denote the orthogonal projection from $\L^2(\Omega;\R^3)$ onto the closed subspace
\begin{equation*}
	\L^{2,\sigma}(\Omega) \coloneqq \{  \varphi \in \L^2(\Omega;\R^3) \mid \Div \varphi=0, \, \varphi \cdot \vec{n} =0 \text{ on } \partial \Omega\},
\end{equation*}
where $\vec{n}$ denotes the outward pointing unit normal vector on $\partial \Omega$ and $\varphi \cdot \vec{n} =0$ is understood in the weak sense, i.e. for every $\psi \in \H^1(\Omega)$ it holds that
\begin{equation*}
	\int_\Omega \varphi \cdot \nabla \psi \d x = 0.
\end{equation*}
In the literature, $\L^{2,\sigma}(\Omega)$ is often denoted by $\L^2_\sigma(\Omega)$. Our notation avoids confusion with the weighted $\L^2$ spaces $\L^2_\omega$, as introduced \Cref{section:bilinear_feedback_systems}. The projection $P$ is called the {\em Helmholtz} or {\em Leray projector}, and it is known that
\begin{equation*}
	G(\Omega) \coloneqq (I-P) \L^2(\Omega;\R^3)
\end{equation*}
can be given by $G(\Omega)= \nabla (\widehat{\H^1}(\Omega))$, where
\begin{equation*}
	\widehat{H^1}(\Omega) \coloneqq \left\{ q \in \H^1(\Omega) \middle \vert \int_\Omega q \d x =0 \right\}.
\end{equation*}
One can prove that $\nabla : \widehat{\H^1}(\Omega) \rightarrow G(\Omega)$ is a bounded invertible operator with bounded inverse, denoted by $\mathcal{M}$.\\
The {\em Stokes operator} $A_0$ is defined by
\begin{equation*}
	A_0 \varphi = - \frac{\nu}{\rho} P \Delta \varphi, \quad \varphi \in D(A_0)= \L^{2,\sigma}(\Omega) \cap \H_0^1(\Omega;\R^3) \cap \H^2(\Omega;\R^3).
\end{equation*}
It turns out that $A_0$ is a self-adjoint, strictly positive operator on $\L^{2,\sigma}(\Omega)$. Hence, we can define
\begin{equation*}
	A  \varphi = -A_0 \varphi, \quad \varphi \in D(A)=D(A_0^{\nicefrac{3}{2}}) 
\end{equation*}
on the state space 
\begin{equation*}
	X=D(A_0^{\nicefrac{1}{2}})=\{ \varphi \in \H_0^1(\Omega;\R^3) \mid \Div \varphi =0\}
\end{equation*}
equipped with the standard norm on $\H_0^1(\Omega;\R^3)$ which is equivalent to the graph norm of $A_0^{\nicefrac{1}{2}}$. Thus, $A$ is a self-adjoint and strictly negative operator on $X$ and we obtain that
\begin{equation*}
	X_{\frac{1}{2}}= D(A_0), \quad X_{-\frac{1}{2}} = \L^{2,\sigma}(\Omega).
\end{equation*}
Further, we introduce the spaces
\begin{equation*}
	U_1=U_2= \L^2(\Omega;\R^3), \quad Y= D(A_0)
\end{equation*}
and the operators $B_i \in \Lb(U_i,X_{-\frac{1}{2}})$, $i=1,2$ and $C \in \Lb(X_{\frac{1}{2}},Y)$ with bounded extension $C \in \Lb(X)$ given by
\begin{equation*}
	B_1=B_2= P, \quad C=I.
\end{equation*}
We define the bilinear mapping $N:X \times Y \rightarrow U_2$ by
\begin{equation*}
	N(z,y) = - [(z \cdot \nabla)y].
\end{equation*}
In \cite[Proof~of~Proposition~9.2]{TucWei14} it is shown that $N$ satisfies \eqref{eq:assumptions_on_N} for $p=\frac{4}{5}$.

\begin{theorem}
	The Helmholtz projected version of the Navier--Stokes system \eqref{eq:Navier_Stokes} is a bilinear feedback system of the form \ref{eq:BFS} with the above spaces and operators. Moreover, there exist $\omega,\varepsilon>0$ such that \eqref{eq:Navier_Stokes} admits for all $z_0 \in \H_0^1(\Omega;\R^3)$ with $\Div z_0 =0$ and $u_1 \in \L^2_\omega(0,\infty;\L^2(\Omega;\R^3))$ with
	\begin{equation*}
		\lVert z_0 \rVert_{\H_0^1(\Omega;\R^3)} + \lVert u_1 \rVert_{\L^2_\omega(0,\infty;\L^2(\Omega;\R^3))} \leq \varepsilon
	\end{equation*}
	a unique solution $(z,p)$,
	\begin{align*}
		z &\in  \H^1(0,\infty;\L^2(\Omega;\R^3)) \cap C([0,\infty);\H_0^1(\Omega;\R^3)) \cap \L^2(0,\infty;\H^2(\Omega;\R^3)) \\
		p &\in \L^2(0,\infty;\widehat{\H^1}(\Omega))
	\end{align*}
	which satisfies for some $k>0$ and every $t \geq 0$ that
	\begin{equation*}
		\lVert z(t) \rVert_{\H_0^1(\Omega;\R^3)} \leq k \lVert z_0 \rVert_{\H_0^1(\Omega;\R^3)} \e^{- \omega t } + k \e^{- \omega t } \lVert u_1 \e^{\omega \cdot} \rVert_{\L^2(0,\infty;\L^2(\Omega;\R^3))}.
	\end{equation*}
	In particular, \eqref{eq:Navier_Stokes} is locally $\L^2_\omega$-ISS.
\end{theorem}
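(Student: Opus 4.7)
The plan is to cast the Helmholtz--projected Navier--Stokes system as an instance of \ref{eq:BFS} satisfying \Cref{assump:A_self-adjoint} and then to invoke \Cref{thm:ISS_for_BFS} together with \Cref{lem:representation_norm(z)^2_analytic}. Applying the Leray projector $P$ to the momentum equation eliminates the pressure gradient and yields
\begin{equation*}
\dot z(t) = -A_0 z(t) + \tfrac{1}{\rho} P u_1(t) + P N(z(t),Cz(t)),
\end{equation*}
which has the form \ref{eq:BFS} with $A=-A_0$, $B_1=\tfrac{1}{\rho}P$, $B_2=P$, $C=I$ and $N(z,y) = -[(z\cdot \nabla)y]$. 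The required hypotheses have been assembled in the excerpt preceding the statement: $A$ is self-adjoint and strictly negative on $X=D(A_0^{\nicefrac{1}{2}})$; the Leray projector maps $\L^2(\Omega;\R^3)$ boundedly into $\L^{2,\sigma}(\Omega)=X_{-\nicefrac{1}{2}}$, so $B_i \in \Lb(U_i,X_{-\nicefrac{1}{2}})$; the identity is continuous from $X_{\nicefrac{1}{2}}=D(A_0)$ into $Y=D(A_0)$ and extends to a bounded operator on $X$; and \eqref{eq:assumptions_on_N} is satisfied for the bilinear form $N$ with $p=\tfrac{4}{5}$, as cited.

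Next, I would apply \Cref{thm:ISS_for_BFS}, which under these hypotheses directly provides $\omega, \varepsilon>0$ and, for all data with $\lVert z_0\rVert_X + \lVert u_1\rVert_{\L^2_\omega(0,\infty;\L^2(\Omega;\R^3))} \leq \varepsilon$, a unique solution $z\in C([0,\infty);X)$ of the projected system fulfilling
\begin{equation*}
	\lVert z(t)\rVert_X \leq \lVert z_0\rVert_X \e^{-\omega t} + c\, \e^{-\omega t} \lVert \e^{\omega \cdot} u_1 \rVert_{\L^2(0,t;\L^2(\Omega;\R^3))}.
\end{equation*}
The finer regularity $z \in \H^1(0,\infty;\L^{2,\sigma}(\Omega)) \cap \L^2(0,\infty; D(A_0))$ (which, after using $D(A_0)\subseteq \H^2(\Omega;\R^3)\cap \H_0^1(\Omega;\R^3)$, yields the claim on $z$) is delivered by \Cref{lem:representation_norm(z)^2_analytic}, since $-A_0$ generates an exponentially stable analytic semigroup on $\L^{2,\sigma}(\Omega)$ and the forcing $\tfrac{1}{\rho}Pu_1 + PN(z,z)$ belongs to $\L^2(0,\infty;\L^{2,\sigma}(\Omega))$ by the estimate carried out inside the proof of \Cref{thm:global_existence_for_small_data}.

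Finally, I would reconstruct the pressure. Rearranging the original momentum equation,
\begin{equation*}
\nabla p(t) = u_1(t) - \rho \dot z(t) + \nu \Delta z(t) - \rho [(z(t)\cdot \nabla) z(t)]
\end{equation*}
in a distributional sense for almost every $t\geq0$. Applying $I-P$ leaves the left-hand side unchanged (as $\nabla p \in G(\Omega)$) while confirming that the right-hand side lies in $G(\Omega)$. Since $\nabla: \widehat{\H^1}(\Omega)\to G(\Omega)$ is a bounded isomorphism with bounded inverse $\mathcal{M}$, one recovers $p(t) = \mathcal{M}\nabla p(t)$ and obtains $p\in \L^2(0,\infty;\widehat{\H^1}(\Omega))$. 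The main obstacle I anticipate is the verification that the nonlinearity $(z\cdot\nabla)z$ is $\L^2$-in-time integrable with values in $\L^2(\Omega;\R^3)$ so that $\mathcal{M}$ can indeed be applied; this is settled by combining $z \in \L^\infty(0,\infty;\H_0^1(\Omega;\R^3))$ with $z\in \L^2(0,\infty;\H^2(\Omega;\R^3))$ and the three-dimensional Sobolev embedding $\H^1(\Omega)\hookrightarrow \L^6(\Omega)$ to control the pointwise product $z\otimes \nabla z$.
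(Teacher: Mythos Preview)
Your proposal is correct and follows essentially the same route as the paper: verify \Cref{assump:A_self-adjoint} for the Helmholtz-projected system, invoke \Cref{thm:ISS_for_BFS} together with \Cref{lem:representation_norm(z)^2_analytic} for existence, regularity and the ISS estimate of $z$, and then recover $p$ via the Helmholtz decomposition and the inverse $\mathcal{M}$. The only cosmetic differences are that the paper defines $p(t)=\mathcal{M}(I-P)[\nu\Delta z(t)-\rho(z(t)\cdot\nabla)z(t)+u_1(t)]$ directly (avoiding the slightly circular phrasing of ``rearranging'' an equation in which $p$ is not yet defined) and explicitly notes that uniqueness of the pair $(z,p)$ follows by applying $P$ to any solution of \eqref{eq:Navier_Stokes} and using uniqueness for the projected system.
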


\begin{proof}
	The assertion follows from the computations in the proof of \cite[Thorem~9.1]{TucWei14}, \Cref{thm:global_existence_for_small_data} and \Cref{lem:representation_norm(z)^2_analytic}. We give the details for the sake of completeness.\\
	Since the projected version of \eqref{eq:Navier_Stokes} is a bilinear feedback system for which \Cref{assump:A_self-adjoint} is satisfied, we obtain from \Cref{thm:global_existence_for_small_data} and \Cref{lem:representation_norm(z)^2_analytic} the existence of $\omega,\varepsilon>0$ such that there exists for every $z_0 \in X= \{ \varphi \in \H_0^1(\Omega;\R^3) \mid \Div \varphi =0\}$ and $u_1 \in \L^2_\omega(0,\infty;U_1)= \L^2_\omega(0,\infty;\L^2(\Omega;\R^3))$ satisfying
	\begin{equation*}
		\lVert z_0 \rVert_{\H_0^1(\Omega;\R^3)} + \lVert u_1 \rVert_{\L^2_\omega(0,\infty;\L^2(\Omega;\R^3)} \leq \varepsilon,
	\end{equation*}
	a unique solution
	\begin{equation*}
		z \in \H^1(0,\infty;\L^{2,\sigma}(\Omega)) \cap C([0,\infty);\H_0^1(\Omega;\R^3)) \cap \L^2(0,\infty);\H^2(\Omega)),
	\end{equation*}
	which satisfies
	\begin{equation*}
		\lVert z(t) \rVert_{\H_0^1(\Omega)} \leq k\lVert z_0 \rVert_{\H_0^1(\Omega)} \e^{-\omega t} + k \e^{-\omega t} \lVert u_1 \e^{\omega \cdot} \rVert_{\L^2(0,t;\L^2(\Omega;\R^3))}
	\end{equation*}
	for every $t \geq 0$ and some constant $k>0$.
	Since $z \in \H^1(0,\infty;\L^{2,\sigma}(\Omega))$, it holds that
	\begin{equation*}
		\rho \dot{z}(t) = \nu P\Delta z(t) - \rho P[(z(t) \cdot \nabla) z(t)] 
	\end{equation*}
	with each term in $\L^2(0,\infty;\L^2(\Omega;\R^3))$ and hence
	\begin{align*}
		\rho \dot{z}(t)
		&= \nu \Delta z(t) - \rho[(z(t) \cdot \nabla)z(t)] + u_1(t) \\
		& \quad - (I-P) [ \nu \Delta z(t) - \rho(z(t) \cdot \nabla)z(t) + u_1(t)].
	\end{align*}
	Since $(I-P) [ \nu \Delta z(t) - \rho(z(t) \cdot \nabla)z(t) + u_1(t)] \in \L^2(0,\infty;G(\Omega))$ by definition of $G(\Omega)$ we have that
	\begin{equation*}
		p(t) \coloneqq \mathcal{M} (I-P) [ \nu \Delta z(t) - \rho(z(t) \cdot \nabla)z(t) + u_1(t)] \in \L^2(0,\infty;\widehat{\H^1}(\Omega)).
	\end{equation*}
	It follows that the pair $(z,p)$ is a solution to \eqref{eq:Navier_Stokes} with the asserted regularities. The uniqueness follows by applying $P$ to \eqref{eq:Navier_Stokes} and the uniqueness of the solution of the projected version of the Navier--Stokes system, which is a bilinear feedback system.
\end{proof}

\subsection{A wave equation}\label{section:wave_equation}

We consider the following wave-type equation on a bounded Lipschitz domain $\Omega \subseteq \R^d$ with dimension $d \leq 4$. The equation reads, for $t \in[0,\infty)$ and $x \in \Omega$,

\begin{equation}\label{eq:wave_equation}
	\left\{ \begin{aligned}
		\ddot{\omega}(t,x) + \dot{\omega}(t,x) - \omega_{xx}(t,x) +(\omega(t,x))^2 &= u_1(t,x),\\
		\omega_{\vert_{x \in \partial \Omega}}(t,x) &=0,\\
		\omega(0,x)&=\omega_0(x),\\
		\dot{\omega}(0,x) &= \omega_1(x), \\
		y_1(t,x)&=\omega(t,x),\\
		y_2(t,x) &= \dot{\omega}(t,x),
	\end{aligned}\right.
\end{equation}
and the corresponding first order system on the state space $X= \H_0^1(\Omega) \times \L^2(\Omega)$
\begin{equation}\label{eq:wave_equation_first_order}
	\left\{ \begin{array}{rll}
		\begin{pmatrix}
			\dot{\varphi}(t)\\
			\dot{\psi}(t)\\
		\end{pmatrix} &= A \begin{pmatrix}
			\varphi(t)\\
			\psi(t)\\
		\end{pmatrix} + \begin{pmatrix}
			0 \\
			u_1(t)\\
		\end{pmatrix} + \begin{pmatrix}
			0\\
			\varphi^2(t)\\
		\end{pmatrix}, & t \in (0,\infty),\\
		\begin{pmatrix}
			\varphi(0)\\
			\psi(0)\\
		\end{pmatrix} &= \begin{pmatrix}
			\varphi_0\\
			\psi_0\\
		\end{pmatrix},\\
		\begin{pmatrix}
			y_1(t) \\
			y_2(t) \\
		\end{pmatrix} &= \begin{pmatrix}
			\varphi(t)\\
			\psi(t) \\
		\end{pmatrix} , & t \in [0,\infty),
	\end{array} \right.
\end{equation}
derived by the transformations
\begin{align*}
	\begin{pmatrix}
		\varphi \\
		\psi\\
	\end{pmatrix}
	= \begin{pmatrix}
		\omega \\
		\dot{\omega}\\
	\end{pmatrix},
	\quad  \begin{pmatrix}
		\varphi_0 \\
		\psi_0\\
	\end{pmatrix}
	= \begin{pmatrix}
		\omega_0\\
		\omega_1\\
	\end{pmatrix},
\end{align*}
where the operator $A$ on $X$ is given by
\begin{align*}
	A=
	\begin{pmatrix}
		0 & I \\
		\Delta & -I\\
	\end{pmatrix}, \quad D(A)= \H^2(\Omega) \cap \H_0^1(\Omega) \times \H_0^1(\Omega).
\end{align*}
It is well-known that $A$ generates an exponentially stable $C_0$-semigroup on $X$.
We define the input spaces and control operators
\begin{equation}
	U_1=U_2 = \L^2(\Omega), \quad B_1 u =B_2 u=
	\begin{pmatrix}
		0\\
		u\\
	\end{pmatrix},
\end{equation}
as well as the output space and observation operator
\begin{equation*}
	Y=X, \quad C=I.
\end{equation*}
It is obvious that $B_i$, $i=1,2$, are bounded from $U_i$ to $X$ and that $C$ is bounded from $X$ to $Y$. With these spaces and operators, \ref{eq:LS} is well-posed and \eqref{eq:wave_equation_first_order} can be seen as a bilinear feedback system \ref{eq:BFS} with bilinearity $N:X \times Y \rightarrow U_2$ given by
\begin{equation*}
	N\left(
	\begin{pmatrix}
		\varphi_1 \\
		\psi_1\\
	\end{pmatrix},
	\begin{pmatrix}
		\varphi_2\\
		\psi_2\\
	\end{pmatrix} \right) = \varphi_1 \varphi_2.
\end{equation*}

Since $d \leq 4$, the embedding $\H^1(\Omega) \hookrightarrow \L^4(\Omega)$ is continuous, which yields that $N$ is well-defined and satisfies \eqref{eq:assumptions_on_N} for any $p \in (0,1)$ (since $Y=X$). Indeed, for $\varphi_1 , \varphi_2 \in \H^1(\Omega)$ we have that
\begin{equation*}
	\lVert \varphi_1 \varphi_2 \rVert_{\L^2(\Omega)} \leq \lVert \varphi_1 \rVert_{\L^4(\Omega)} \lVert \varphi_2 \rVert_{\L^4(\Omega)} \leq c^2 \lVert \varphi_1 \rVert_{\H^1(\Omega)} \lVert \varphi_2 \rVert_{\H^1(\Omega)},
\end{equation*}
where $c$ is the embedding constant.

\Cref{thm:global_existence_for_small_data} implies the following result.

\begin{theorem}
	The wave equation \eqref{eq:wave_equation_first_order} is a bilinear feedback system of the form \ref{eq:BFS} with the above spaces and operators. Moreover, there exist $\omega,\varepsilon>0$ such that \eqref{eq:wave_equation_first_order} admits for all $(\varphi_0,\psi_0) \in \H_0^1(\Omega) \times \L^2(\Omega)$ and $u_1 \in \L^2_\omega(0,\infty;\L^2(\Omega))$ with
	\begin{equation*}
		\lVert \varphi_0 \rVert_{\H_0^1(\Omega)} + \lVert \psi_0 \rVert_{\L^2(\Omega)} + \lVert u_1 \rVert_{\L^2_\omega(0,\infty;\L^2(\Omega))} \leq \varepsilon,
	\end{equation*}
	a unique mild solution $(\varphi, \psi) \in C([0,\infty);\H_0^1(\Omega) \times \L^2(\Omega))$ which satisfies for some $k>0$ and every $t \geq 0$ that
	\begin{align*}
		&\lVert \varphi(t) \rVert_{\H^1(\Omega)} + \lVert \psi(t) \rVert_{\L^2(\Omega)} \\
		&\leq k \e^{- \omega t } \left( \lVert \varphi_0 \rVert_{\H^1(\Omega)} +  \lVert \psi_0 \rVert_{\L^2(\Omega)}  \right) + k \e^{-\omega t}  \lVert u_1 \e^{\omega \cdot} \rVert_{\L^2(0,t;\L^2(\Omega))}.
	\end{align*}
	In particular, \eqref{eq:wave_equation_first_order} is locally $\L^2_\omega$-ISS.
\end{theorem}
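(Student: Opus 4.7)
The plan is to combine the approximation result of \Cref{thm:wave_classical_approx} with a modified-energy Lyapunov argument. Existence, uniqueness of the mild solution and the a priori decay $\|(\varphi(t),\psi(t))\|_X\le k\varepsilon\e^{-\omega t}$ are already supplied by \Cref{thm:global_existence_for_small_data} applied to the bilinear feedback formulation \eqref{eq:wave_equation_first_order}; what remains is to derive the ISS estimate. \Cref{thm:ISS_for_BFS} cannot be invoked directly because the perturbation $K=\begin{pmatrix} 0 & 0 \\ 0 & -I\end{pmatrix}$ is only dissipative, not strictly dissipative, so $\|\cdot\|_X^2$ is not a strict Lyapunov function for the uncontrolled system.

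I would therefore replace $\|\cdot\|_X^2$ by the modified functional
\begin{equation*}
V(\varphi,\psi) \coloneqq \tfrac12\|\nabla\varphi\|_{\L^2(\Omega)}^2 + \tfrac12\|\psi\|_{\L^2(\Omega)}^2 + \tfrac13\int_\Omega \varphi^3\,\d x + \alpha \int_\Omega \varphi\psi\,\d x,
\end{equation*}
with a small parameter $\alpha>0$ to be fixed. The cubic term is tailored to cancel the troublesome nonlinear contribution $-\int_\Omega \psi\varphi^2\,\d x$ arising from $\tfrac12\tfrac{\d}{\d t}\|\psi\|_{\L^2}^2$, while the cross term is the classical multiplier that injects dissipation in the $\varphi$-direction. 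Since $d\le 3$, the Sobolev embedding $\H_0^1(\Omega)\hookrightarrow\L^3(\Omega)$ together with Young's inequality make $V$ equivalent to $\|\cdot\|_X^2$ on a sufficiently small ball of $X$.

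Along a classical solution (produced by \Cref{lem:wave_classical_solutions}) a direct computation, exploiting the cancellation $\tfrac{\d}{\d t}\tfrac13\int_\Omega \varphi^3\,\d x = \int_\Omega \psi\varphi^2\,\d x$, yields
\begin{equation*}
\dot V = -(1-\alpha)\|\psi\|_{\L^2}^2 - \alpha\|\nabla\varphi\|_{\L^2}^2 - \alpha\int_\Omega \varphi\psi\,\d x - \alpha\int_\Omega \varphi^3\,\d x + \int_\Omega u_1(\psi+\alpha\varphi)\,\d x.
\end{equation*}
The indefinite term $-\alpha\int_\Omega \varphi\psi\,\d x$ is absorbed via Poincar\'e and Young, the cubic contribution is controlled by $|\int_\Omega \varphi^3\,\d x|\le c\|\varphi\|_{\H_0^1}^3\le c\delta\|\varphi\|_{\H_0^1}^2$ using the a priori bound $\|(\varphi(t),\psi(t))\|_X\le \delta$ (which follows from \Cref{thm:global_existence_for_small_data} by shrinking $\varepsilon$), and the input term is handled by Young's inequality. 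Choosing $\alpha$ and then $\varepsilon$ sufficiently small produces a differential inequality $\dot V(t)\le -2\nu V(t) + c\|u_1(t)\|_{\L^2(\Omega)}^2$ for some $\nu,c>0$. Gronwall's lemma together with the equivalence $V\asymp\|\cdot\|_X^2$ then yields the claimed bound for classical solutions, after shrinking $\omega$ so that $\omega\le \nu$.

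Finally, \Cref{thm:wave_classical_approx} transports the estimate from classical to mild solutions: a given mild solution is approximated in $C([0,\tau];X)$ by classical ones with nearby data, the pointwise bound is applied to each member of the sequence, and one passes to the limit using continuity of the right-hand side in the data. The main obstacle is the careful tuning of three smallness parameters, namely $\alpha$ (coupling strength in $V$), $\delta$ (radius of the invariant ball) and $\varepsilon$ (size of the data), which must be fixed in the right order so that $V$ stays coercive while every indefinite term in $\dot V$ is strictly dominated by the dissipation.
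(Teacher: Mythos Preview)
Your argument is correct and follows the same overall architecture as the paper: first derive a differential Lyapunov inequality for classical solutions via a modified energy containing the cross term $\alpha\langle\varphi,\psi\rangle_{\L^2}$, conclude by Gronwall, and then pass to mild solutions through the approximation of \Cref{thm:wave_classical_approx}.

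The one substantive difference is the choice of Lyapunov function. You include the cubic potential $\tfrac13\int_\Omega\varphi^3$ so that the leading nonlinear contribution $\langle\varphi^2,\psi\rangle_{\L^2}$ cancels exactly in $\dot V$, leaving only the smaller term $-\alpha\int_\Omega\varphi^3$ (from the cross-multiplier) to be absorbed via the a~priori bound $\|\varphi\|_{\H_0^1}\le\delta$. The paper instead keeps the Lyapunov function purely quadratic, namely $E_\varepsilon(f,g)=\|f\|_{\H^1}^2+\|g\|_{\L^2}^2+\varepsilon\langle f,g\rangle_{\L^2}$, and controls the full term $\langle\varphi^2,\psi\rangle_{\L^2}$ directly by Young's inequality together with the smallness bound. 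Both routes require the same ingredients (cross-multiplier for strict dissipation, Sobolev embedding $\H_0^1\hookrightarrow\L^4$ resp.\ $\L^3$ in $d\le 3$, and the $\delta$-bound from \Cref{thm:global_existence_for_small_data}); the paper's quadratic choice has the mild advantage that $E_\varepsilon$ is globally equivalent to $\|\cdot\|_X^2$, whereas your $V$ is only locally coercive, but since the whole argument is confined to a small ball anyway this costs nothing. Note also that with the sign convention of the first-order system \eqref{eq:wave_equation_first_order} (where the nonlinear forcing is $+\varphi^2$) your cubic term should carry the opposite sign; this is cosmetic and does not affect the validity of the scheme.
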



\def\cprime{$'$}



\end{document}